\def\R{\mathbb{R}}
\def\KK{\mathcal{K}}
\def\PP{\mathcal{P}}
\def\Ss{\mathcal{S}}
\def\Sn{\mathcal{S}^n}
\def\Snp{\mathcal{S}^n_+}
\newcommand{\face}{{\rm face}}
\newcommand{\svec}{{\rm svec\,}}
\newcommand{\GG}{G}
\newcommand{\VV}{V}
\newcommand{\EE}{E}
\DeclareMathOperator{\tr}{{tr}}
\DeclareMathOperator{\Diag}{{Diag}}
\newcommand{\range}{\mathrm{range}\,}
\newcommand{\dist}{\mathrm{dist}}
\newcommand{\proj}{\mathrm{proj}\,}
\newcommand{\noise}{\nu}
\newcommand{\argmin}{\mathop{\rm argmin}}
\newtheorem{thm}{Theorem}[section]
\newtheorem{lem}[thm]{Lemma}
\newtheorem{obs}[thm]{Observation}
\newtheorem{cor}[thm]{Corollary}
\newtheorem{example}[thm]{Example}
\title{Noisy Euclidean distance realization:
	robust facial reduction and the Pareto frontier} 
\author{{D. Drusvyatskiy}\thanks{Department of Mathematics, University of Washington, Seattle, WA 98195-4350, USA. Research  was partially supported by the AFOSR YIP award FA9550-15-1-0237. \url{math.washington.edu/\~ddrusv}
	}
	\and
	{N. Krislock}\thanks{Department of Mathematical Sciences,
		Northern Illinois University, DeKalb, IL 60115, USA.
		\url{www.math.niu.edu/\~krislock}
	}
	\and
	{Y.-L. Voronin}\thanks{Department of Computer Science, University of Colorado, 
		Boulder, CO 80309-0430, USA. \url{cs.colorado.edu/\~yuvo9296}
	}
	\and
	{H. Wolkowicz}\thanks{Department of Combinatorics and Optimization, 
		Waterloo, Ontario N2L 3G1, Canada. Research supported by Natural Sciences Engineering Research
		Council Canada and a grant from AFOSR. \url{orion.math.uwaterloo.ca/\~hwolkowi}}
	}
\begin{document}
\maketitle
\slugger{mms}{xxxx}{xx}{x}{x--x}

\begin{abstract}
We present two algorithms for large-scale low-rank Euclidean distance matrix completion problems, based on semidefinite optimization. Our first method works by relating cliques in the graph of the known distances to faces of the positive semidefinite cone, yielding a combinatorial procedure that is provably robust and parallelizable. Our second algorithm is a first order method for maximizing the trace---a popular low-rank inducing regularizer---in the formulation of the problem with a constrained misfit. Both of the methods output a point configuration that can serve as a high-quality initialization for local optimization techniques. Numerical experiments on large-scale sensor localization problems illustrate the two approaches.
\end{abstract}

\begin{keywords} Euclidean distance matrices, sensor network localization,
	convex optimization, facial reduction, Frank-Wolfe algorithm, semidefinite programming \end{keywords}

\begin{AMS} 90C22, 90C25 , 52A99  \end{AMS}

\pagestyle{myheadings}
\thispagestyle{plain}
\markboth{NOISY EDM COMPLETION}{D. DRUSVYATSKIY, N. KRISLOCK, Y.-L. VORONIN, AND H. WOLKOWICZ}

\section{Introduction.}
A pervasive task in distance geometry is the inverse problem: given only local pairwise Euclidean distances among a set of points, recover their locations in space. More precisely,
given a weighted undirected graph $G=(V,E,d)$ on a vertex set $\{1,\ldots,n\}$ and an integer $r$, find (if possible) a set of points $x_1,\ldots,x_n$ in $\R^r$ satisfying \[\|x_i-x_j\|^2=d_{ij}, \quad\textrm{ for all edges } ij\in E,\]
where $\|\cdot\|$ denotes the usual Euclidean norm on $\R^r$.
In most applications, the given squared distances $d_{ij}$ are inexact, and one then seeks points $x_1,\ldots, x_n$ satisfying the distance constraints only approximately.
This problem appears under numerous names in the literature, such as Euclidean Distance Matrix (EDM) completion and graph realization \cite{AlWo:99,MR99c:05135,dattorro:05}, and is broadly applicable for example in  wireless networks, statistics, robotics, protein reconstruction, and dimensionality reduction in data analysis; the recent survey \cite{surv} has an extensive list of relevant references. 
Fixing notation, we will refer to this problem as {\em EDM completion}, throughout. 

The EDM completion problem can be modeled
as the nonconvex feasibility problem: find a symmetric $n\times n$ matrix $X$ satisfying
\begin{equation}\label{eqn:SDP_form}
\left\{ \begin{array}{l}
X_{ii} + X_{jj} - 2X_{ij} = d_{ij},\qquad \textrm{ for all } ij\in E,\\
Xe = 0,\\
\rank X \leq r, \\
X \succeq 0,
\end{array} \right\} 
\end{equation}
where $e$ stands for the vector of all ones. Indeed, if $X=PP^T$ is a maximal rank factorization of such a matrix $X$, then the rows of $P$ yield a solution to the EDM completion problem. The constraint $Xe=0$ simply ensures that the rows of $P$ are centered around the origin.
Naturally a convex relaxation is obtained by simply ignoring the rank constraint. The resulting problem is convex (a semidefinite program (SDP) in fact) and so more tractable. 
For many instances, particularly coming from dense wireless networks, this relaxation is exact, that is the solution of the convex rank-relaxed problem automatically has the desired rank $r$ \cite{sy07}. 
Consequently, semidefinite programming techniques have proven to be extremely useful for this problem; see for example
\cite{BYACM:06,BYIEEE:06,BYSJC:08,BY:04,by03,
	KH:2010,pt09,sy07,wzyb08}. For large networks, however, the SDPs involved can
become intractable for off-the-shelf methods. Moreover, this difficulty is compounded by the inherent {\em ill-conditioning} in the SDP relaxation of \eqref{eqn:SDP_form}---a key theme of the paper. 
For example, it is easy to see that each clique in $G$ on more than $r+2$ vertices certifies that the SDP is {\em not} strictly feasible, provided the true points of the clique were in general position in $\R^r$. 

In the current work, we attempt to close the computational
gap by proposing a combinatorial algorithm and an efficient
first-order method for the EDM completion problem.
The starting point is the observation that the cliques in $G$ play a special role in the completion problem. Indeed, from each sufficiently large clique in the graph $G$, one can determine a face of the positive semidefinite cone containing the entire feasible region of \eqref{eqn:SDP_form}. This observation immediately motivated the algorithm of \cite{KH:2010}. The procedure proceeds by collecting a large number of cliques in the graph and intersecting the corresponding faces two at a time (while possibly growing cliques), each time causing a dimensional decrease in the problem. If the SDP relaxation is exact and the graph is sufficiently dense, the method often terminates with a unique solution without having to invoke an SDP solver.
An important caveat of this geometric approach
is that near-exactness of the distance
measurements is essential for the algorithm to work,
both in theory and in practice, for the simple reason that randomly perturbed
faces of the positive semidefinite cone typically intersect
only at the origin.
Remarkably, using dual certificates,
we are able to design a method complementary to
\cite{KH:2010} for the problem \eqref{eqn:SDP_form}
 that under reasonable conditions, is provably
robust to noise in the distance measurements, in the sense that the output error
is linearly proportional to the noise level. Moreover, in contrast to the algorithm \cite{KH:2010},
the new method is conceptually easy to parallelize.
In the late stages of writing the current paper, we became aware of the related work \cite{face_sing}. There the author proposes a robust algorithm for the EDM completion problem that is in the same spirit as ours, but is stated in the language of rigidity theory. As a byproduct, our current work yields an interpretation of the algorithm \cite{face_sing} in terms of facial reduction iterations and SDP techniques. Moreover, we offer additional improvements via a nonrigid clique union subroutine (Subsection~\ref{subsubsec:clique_union}), which we found essential for the success of the algorithm.

In the second part of the paper, we propose a first order method for solving the noisy EDM completion problem. 
To this end, we consider maximizing the trace---a popular low-rank
inducing regularizer \cite{AEGMWYAB:06,WeinbergerEtAl}---in
the formulation of the problem:
\begin{align}
\text{maximize}~~~ &\tr X\notag\\
\text{subject to}~~&\sum_{ij\in E} |X_{ii}+X_{jj}-2X_{ij}-d_{ij}|^2\leq \sigma \label{eqn:robust_const}\\
&Xe=0\notag\\
&X\succeq \notag 0.
\end{align} 
Here $\sigma$ is an a priori chosen tolerance reflecting the total noise level. 
Notice, that this formulation directly contrasts the usual min-trace regularizer in compressed sensing; nonetheless it is very natural. An easy computation shows that in terms of the factorization $X=PP^T$, the equality $\tr(X)= \frac{1}{2n}\sum^n_{i,j=1} \|p_i-p_j\|^2$ holds, where $p_i$ are the rows of $P$. Thus trace maximization serves to ``flatten'' the realization of the graph. 
We note in passing that we advocate using \eqref{eqn:robust_const} instead of perhaps the more usual regularized problem 
\begin{align}
\text{minimize}~~~ &\sum_{ij\in E} |X_{ii}+X_{jj}-2X_{ij}-d_{ij}|^2-\lambda\tr X\notag\\
\text{subject to}~~&Xe=0,\quad X\succeq \notag 0\notag.
\end{align} 
The reason is that choosing a reasonable value of the trade-off parameter $\lambda$ can be difficult, whereas an estimate of $\sigma$ is typically available from a priori known information on the noise level.

As was observed above, for $\sigma=0$ the problem formulation \eqref{eqn:robust_const} notoriously fails strict feasibility. In particular, for small $\sigma\geq 0$ the feasible region is very thin and the solution to the problem is unstable. As a result, iterative methods that maintain feasibility are likely to exhibit serious difficulties. Keeping this in mind, we propose an {\em infeasible} first-order method, which is not directly effected by the poor conditioning of the underlying problem.

To this end, consider the following parametric problem, obtained by ``flipping'' the objective and the quadratic constraint in \eqref{eqn:robust_const}:
\begin{align*}
v(\tau):=\quad \text{minimize} ~~~~& \displaystyle\sum_{ij\in E}  \,|X_{ii}+X_{jj}-2X_{ij}-d_{ij}|^2 \\\notag
\text{subject to}~~~~
& \tr X=\tau\notag\\ 
& Xe=0\\ \notag
& X\succeq 0. \notag
\end{align*}
Notice that the problem of evaluating $v(\tau)$ is readily amenable to first order methods, in direct contrast to \eqref{eqn:robust_const}. Indeed, the feasible region is geometrically simple. In particular, linear optimization over the region only requires computing a maximal eigenvalue. Hence the evaluation of $v(\tau)$
is well adapted for the {\em Frank-Wolfe method}, a projection-free first order algorithm.
Indeed, the gradient of the objective function is very sparse (as sparse as the edge set $E$) and therefore optimizing the induced linear functional  over the feasible region then becomes a cheap operation. 
Now, solving \eqref{eqn:robust_const} amounts to finding the largest value of $\tau$ satisfying $v(\tau)\leq \sigma$, a problem that can be solved by an approximate Newton method.
Analogous root finding strategies can be found, for example, in
\cite{var_prop,par,lsqr_con,spgl1:2007}. Using this algorithm, we investigate the apparent superiority of the max-trace regularizer over the min-trace regularizer with respect to both low-rank recovery and efficient computation.


The outline of the paper is as follows. Section~\ref{sect:problem_statement} collects some preliminaries on the facial structure of the positive semidefinite cone and the SDP relaxation of the EDM completion problem. Section 
\ref{sect:rfr} presents the proposed robust facial reduction algorithm and provides
some numerical illustrations.
Section \ref{sect:Pareto} describes the proposed Pareto search technique with Frank-Wolfe iterations, and presents numerical experiments.

\section{Preliminaries.}\label{sect:problem_statement}

In this section, we record some preliminaries and formally state the 
EDM completion problem. 

\subsection{Geometry of the positive semidefinite cone.}
The main tool we use in the current work 
(even if indirectly) is semidefinite programming (SDP).
To this end, let $\mathcal{S}^n$ denote the Euclidean 
space of $n\times n$ real symmetric matrices endowed 
with the trace inner product 
$\langle A,B\rangle=\tr AB$ and the Frobenius norm 
$\|A\|_F=\sqrt{\tr A^2}$. 
The convex cone of $n\times n$ positive semidefinite (PSD)
matrices will be denoted by $\mathcal{S}^n_+$. This cone
defines a partial ordering:
for any $A,B\in\mathcal{S}^n$ the binary relation
$A\succeq B$ means $A-B\in\mathcal{S}^n_+$.
A convex subset $\mathcal{F}$ of $\mathcal{S}^n_+$ is a {\em face} of $\mathcal{S}^n_+$ if $\mathcal{F}$ contains any line segment in $\mathcal{S}^n_+$ whose relative interior intersects $\mathcal{F}$, and a face $\mathcal{F}$ of $\mathcal{S}^n_+$ is \emph{proper}
if it is neither empty nor all of $\mathcal{S}^n_+$.
All faces of $\mathcal{S}^n_+$ have the (primal) form 
\begin{equation} \label{eqn:prim_form1}
\mathcal{F}=\left\{ 
U\begin{bmatrix} A & 0 \\ 0 & 0 \end{bmatrix}U^T: 
A\in\mathcal{S}^k_+
\right\},
\index{trace inner product}
\index{Frobenius norm, $\|\cdot\|_F$}
\index{$\|\cdot\|_F$, Frobenius norm}
\index{$n\times n$ real symmetric matrices, $\Ss^n$}
\index{$\Ss^n$, real symmetric matrices}
\index{positive semidefinite matrices, $\Ss^n_+$}
\index{$\Ss^n_+$, positive semidefinite matrices}
\index{L\"owner cone ordering, $\succeq$}
\index{$\succeq$, L\"owner cone ordering}
\index{vector of all ones, $e$}
\index{$e$, vector of all ones}
\end{equation}
for some $n\times n$ orthogonal matrix $U$ and some 
integer $k\in\{0,1,\ldots,n\}$. Any face $\mathcal{F}$ of $\mathcal{S}^n_+$ can also be written in dual form as $Y^{\perp}\cap \mathcal{S}^n_+$ for some PSD matrix $Y\in\mathcal{S}^n_+$. Indeed, suppose that $\mathcal{F}$ has the representation \eqref{eqn:prim_form1}. 
Then we may equivalently write $\mathcal{F}=Y^{\perp}\cap \mathcal{S}^n_+$, with $Y:=U\begin{bmatrix} 0 & 0 \\ 0 & B \end{bmatrix}U^T$ for any nonsingular matrix $B$ in $\mathcal{S}^{n-k}_+$.
In general, if a face has the form 
$\mathcal{F}=Y^{\perp}\cap \mathcal{S}^n_+$ 
for some PSD matrix $Y$, 
then we say that $Y$ {\em exposes} $\mathcal{F}$.
Finally, for any convex subset 
$\Omega\subset \Ss^{n}_{+}$, 
the symbol $\face(\Omega;\Ss^{n}_{+})$ will denote the 
minimal face of $\Ss^{n}_{+}$
containing $\Omega$. The cone $\face(\Omega;\Ss^{n}_{+})$ then coincides with $\face(X;\Ss^{n}_{+})$, where $X$ is any maximal rank matrix in $\Omega$.

\subsection{EDM completion problem.}
Throughout, we fix an integer $r\geq 0$ and a weighted undirected
graph $G=(V,E,d)$ on a node set $V=\{1,\ldots,n\}$,
with an edge set $E\subseteq \{ij: 1\leq i <j \leq n\}$
and a vector $d\in\R^\EE$ of nonnegative weights.
The vertices represent points in an $r$-dimensional
space ${\R}^r$,
while the presence of an edge $ij$ joining the vertices
$i$ and $j$ signifies that the physical distance between
the points $i$ and $j$ is available.

The {\em EDM completion problem} is to find
a set of points in 
$x_1,\ldots,x_n\in\R^r$ satisfying 
\begin{equation*} 
\|x_i-x_j\|^2=d_{ij},\quad\quad\textrm{ for all } ij\in E.
\end{equation*}
\index{sensor network localization, SNL! noiseless}   
Such a collection of points $x_1,\ldots,x_n$ is 
said to {\em realize} the graph $G$ in $\R^r$. Notice that without loss of generality, such realizing points $x_1,\ldots,x_n$ can always be translated so that they are centered around the origin, meaning $\sum_i x_i=0$.

The EDM completion problem is equivalent to finding 
a matrix $X\in \mathcal{S}^n$ satisfying the system:
\begin{equation} \label{eqn:main_eq}
\left\{ \begin{array}{l}
X_{ii}+X_{jj}-2X_{ij}=d_{ij},\quad \textrm{ for all } ij\in E,\\
Xe=0,\\
\rank X\leq r, \\
X\succeq 0.
\end{array} \right\} 
\end{equation}
Here $e\in\R^n$ denotes the vector of all ones. 
Indeed, suppose that $X$ satisfies this system. 
Then since $X$ is positive semidefinite and 
has rank at most $r$, 
we may form a factorization $X=PP^T$ for some $n\times r$ 
matrix $P$. 
It is easy to verify that the rows of $P$ realize $G$ in $\R^r$. 
Conversely, if some points $x_1,\ldots,x_n\in\R^r$ 
realize $G$ in $\R^r$, then we may center them around the 
origin and assemble them into the matrix 
$P=[x_1;\ldots;x_n]^T\in\R^{n\times r}$. 
The resulting {\em Gram matrix} 
$X:=PP^T$ is feasible for the above system. 
For more details, see for example \cite{KH:2010}.

The EDM completion problem is nonconvex 
and is NP-hard in general 
\cite{MR84b:94003,yemeni:79}. 
A convex relaxation is obtained simply by ignoring the 
rank constraint yielding a convex SDP feasibility problem:  
\begin{equation}\label{eqn:SDP_conv}
\left\{ \begin{array}{l}
X_{ii}+X_{jj}-2X_{ij}=d_{ij},\quad \textrm{ for all } ij\in E,\\
Xe=0,\\
X\succeq 0.
\end{array} \right\} 
\end{equation}
For many EDM completion problems on fairly dense graphs, 
this convex relaxation is ``exact'' \cite{SoYe:05}. 
For example the following is immediate. 
\smallskip

\begin{obs}[Exactness of the relaxation]\label{obs:exa} 
	If the EDM completion problem~\eqref{eqn:main_eq}
	is feasible, then the following are equivalent:
	\begin{enumerate}
		\item 
		No realization of $G$ in $\R^l$, for $l >r$, 
		spans the ambient space $\R^l$.
		\item 
		Any solution of the relaxation \eqref{eqn:SDP_conv} 
		has rank at most $r$ and consequently any solution of \eqref{eqn:SDP_conv} yields a 
		realization of $G$ in $\R^r$.
	\end{enumerate}
\end{obs}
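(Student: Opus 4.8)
The plan is to build a dictionary between feasible points of the relaxation \eqref{eqn:SDP_conv} and centered realizations of $G$, under which the rank of a feasible matrix equals the affine dimension of the associated point configuration. Granting this correspondence, both (1) and (2) reduce to the single statement that no realization of $G$ has affine dimension exceeding $r$, and the equivalence falls out immediately. The role of the standing hypothesis that \eqref{eqn:main_eq} is feasible is only to keep us in the nonvacuous regime: it guarantees that \eqref{eqn:SDP_conv} is feasible and that a realization in $\R^r$ exists.

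First I would record the correspondence explicitly. Given any feasible $X$ for \eqref{eqn:SDP_conv} with $k := \rank X$, factor $X = PP^T$ with $P \in \R^{n \times k}$ and call its rows $x_1, \ldots, x_n$. The edge constraints read $\|x_i - x_j\|^2 = d_{ij}$, so the $x_i$ realize $G$ in $\R^k$. Since $X \succeq 0$, the relation $Xe = 0$ yields $\|P^T e\|^2 = e^T X e = 0$, hence $\sum_i x_i = 0$; the configuration is centered, so its affine hull contains the origin and therefore coincides with its linear span, a subspace of dimension $\rank P = \rank X = k$. Thus $X$ corresponds to a centered realization spanning $\R^k$. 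Conversely, any realization of $G$ in $\R^l$ may be translated to be centered (which preserves all distances) and assembled into a matrix $P$ whose Gram matrix $X = PP^T$ is feasible for \eqref{eqn:SDP_conv}, with $\rank X$ equal to the affine dimension of the realization.

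Next I would reconcile the ``spanning'' phrasing of (1) with affine dimension and close the loop. A configuration of affine dimension $m$ embeds isometrically into $\R^l$ for every $l \ge m$, spanning exactly when $l = m$; hence some realization has affine dimension greater than $r$ if and only if, for some $l > r$, there is a realization in $\R^l$ spanning $\R^l$. Negating, statement (1) says precisely that no realization of $G$ has affine dimension exceeding $r$, which by the dictionary is exactly the assertion that every feasible $X$ of \eqref{eqn:SDP_conv} satisfies $\rank X \le r$ — the substance of (2); the final clause of (2) then follows by writing any such $X = PP^T$ with $P \in \R^{n \times r}$ to extract a realization in $\R^r$. Concretely, a hypothetical solution of rank $> r$ produces via the dictionary a spanning realization violating (1), and a hypothetical spanning realization in $\R^l$ with $l > r$ produces a feasible matrix of rank $l > r$ violating (2), giving both implications. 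The only delicate bookkeeping, rather than a genuine obstacle, is the identification of $\rank X$ with affine dimension: one must invoke both $Xe = 0$ and $X \succeq 0$ to pass from the linear span seen by the factorization to the affine span referenced by the spanning condition, and must note that affine dimension is preserved under the isometric re-embeddings used above. Everything else is direct translation, in line with the claim that the result is immediate.
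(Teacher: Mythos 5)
Your proposal is correct and matches the paper's intent exactly: the paper declares Observation~\ref{obs:exa} ``immediate'' from the Gram-matrix/centered-realization correspondence it sets up just before (factoring $X=PP^T$ and conversely assembling a centered realization into a feasible Gram matrix), and your argument simply writes out that dictionary together with the identification of $\rank X$ with the affine dimension of the configuration. No gaps; the one point you flag --- reading ``spans'' as the affine/centered span so that rank equals spanning dimension --- is the reading the paper uses throughout.
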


\smallskip
In theory, the exactness of the relaxation is a great 
virtue. 
From a computational perspective, however, 
exactness implies that the SDP formulation 
\eqref{eqn:SDP_conv} does not admit a positive 
definite solution, 
i.e., that strict feasibility fails.
Moreover, it is interesting to note that a very minor 
addition to the assumptions of Observation~\ref{obs:exa} 
implies that the SDP~\eqref{eqn:SDP_conv} admits a unique solution \cite{SoYe:05}.
We provide a quick proof for completeness, 
though the reader can safely skip it.

\smallskip
\begin{obs}[Uniqueness of the solution]\label{obs:unique_sol} 
	If the EDM completion problem~\eqref{eqn:main_eq}
	is feasible, then the following are equivalent:
	\begin{enumerate}
		\item\label{it:1} 
		The graph $G$ cannot be realized in 
		$\R^{r-1}$, and moreover for any $l >r$ 
		no realization in $\R^l$ spans the ambient space 
		$\R^l$.
		\item\label{it:2} 
		The relaxation \eqref{eqn:SDP_conv} has a unique solution.
	\end{enumerate}
\end{obs}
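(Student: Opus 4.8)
The plan is to translate the entire statement into the language of the ranks attained over the feasible spectrahedron of the relaxation \eqref{eqn:SDP_conv}, and then to isolate a single clean lemma: this spectrahedron is a singleton precisely when all of its members share a common rank. First I would record the dictionary between feasible matrices and realizations. If $X$ is feasible for \eqref{eqn:SDP_conv} and $X=PP^T$ is a maximal-rank factorization, then $Xe=0$ forces $P^Te=0$, so the rows of $P$ form a \emph{centered} realization of $G$ whose affine hull has dimension exactly $\rank X$; conversely, centering an arbitrary realization and forming its Gram matrix produces a feasible $X$ whose rank equals the affine-hull dimension. Writing $\mathcal{C}$ for the feasible set of \eqref{eqn:SDP_conv} and letting $k_{\min},k_{\max}$ denote the smallest and largest ranks occurring in $\mathcal{C}$, it follows that $k_{\min}$ is the least dimension in which $G$ can be realized, while $k_{\max}$ is the largest dimension of a \emph{spanning} realization.

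With this dictionary in hand, I would rewrite condition (1). By Observation~\ref{obs:exa}, the ``moreover'' clause (no spanning realization in $\R^l$ for $l>r$) is exactly $k_{\max}\le r$. Feasibility of \eqref{eqn:main_eq} supplies a realization in $\R^r$, hence $k_{\min}\le r$, while the first clause (no realization in $\R^{r-1}$) says $k_{\min}\ge r$; together these pin $k_{\min}=r$. Since $k_{\min}\le k_{\max}$ always holds, condition (1) is therefore equivalent to the single chain of equalities $k_{\min}=k_{\max}=r$, i.e. to the assertion that \emph{every} feasible matrix has rank exactly $r$.

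The heart of the argument is the lemma that $\mathcal{C}$ is a singleton if and only if $k_{\min}=k_{\max}$. The forward implication is immediate, since matrices of distinct ranks are distinct. For the converse I would argue by contrapositive. Recall the fact already used in the discussion of $\face(\Omega;\Snp)$: the maximal-rank elements of a spectrahedron are exactly its relative interior, so every relative boundary point of $\mathcal{C}$ has rank strictly below $k_{\max}$. Now suppose $\mathcal{C}$ contains two distinct points, so that $\mathcal{C}$ is a closed (being the intersection of an affine subspace with the closed cone $\Snp$) convex set of dimension at least one. Because $\Snp$ is a \emph{pointed} cone, $\mathcal{C}$ contains no line; but a closed convex set that coincides with its own relative interior is an affine subspace, which would contain a line. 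Hence $\mathcal{C}$ possesses a relative boundary point $X_b\in\mathcal{C}$, and by the characterization above $\rank X_b<k_{\max}$, so $k_{\min}<k_{\max}$. Combining the lemma with the previous paragraph delivers the equivalence of (1) and (2).

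I expect the main obstacle to be the converse half of the lemma, resting on two facts that both trace back to $\Snp$ containing no lines: (i) relative-interior points of a spectrahedron are precisely the rank-maximizers, and (ii) a positive-dimensional closed section of the pointed cone $\Snp$ must have a relative boundary point. A secondary point requiring care is the bookkeeping that identifies the common rank with $r$ rather than merely with $k_{\min}=k_{\max}$: this is exactly where feasibility in $\R^r$ together with the first clause of (1) enter, and it is the delicate step that must be handled to make the two conditions match precisely.
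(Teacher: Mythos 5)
Your argument is correct and is essentially the paper's own proof in different packaging: both reduce condition (1) to the statement that every feasible matrix has rank exactly $r$, and both derive a contradiction from a second solution by producing a relative-boundary (hence rank-deficient, hence $\R^{r-1}$-realizable) feasible point using pointedness of $\Ss^n_+$ --- the paper via the endpoint of the line segment through the two solutions inside the minimal face, you via the observation that a positive-dimensional closed spectrahedron cannot coincide with its own relative interior. The only caveat, shared equally by the paper (which dismisses $(2)\Rightarrow(1)$ as ``immediate''), is that your lemma yields $(2)\Leftrightarrow(k_{\min}=k_{\max})$ while $(1)\Leftrightarrow(k_{\min}=k_{\max}=r)$, so the direction $(2)\Rightarrow(1)$ still needs the additional input that the common rank equals $r$, which cannot come from the first clause of (1) since that is the conclusion there.
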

\begin{proof}
	The implication $\ref{it:2}\Rightarrow \ref{it:1}$ is 
	immediate. To see the converse implication 
	$\ref{it:1}\Rightarrow \ref{it:2}$, suppose that the 
	SDP \eqref{eqn:SDP_conv} admits two solutions $X$ and $Y$. Define $\mathcal{F}$ now to be the minimal face  of $\mathcal{S}^n_+$ containing the feasible region.
	Note that by 
	Observation~\ref{obs:exa}, any solution of 
	the SDP has rank at most $r$, and hence every matrix in $\mathcal{F}$ has rank at most $r$. 
	Consider now the line $L:=\{X+\lambda (Y-X): 
	\lambda\in\R\}$. Clearly $L$ is contained in the linear span of $\mathcal{F}$ and the line segment $L\cap\mathcal{F}$ is contained in the feasible region. Since $\mathcal{F}$ is pointed, the intersection $L\cap\mathcal{F}$ has at least one endpoint $Z$, necessarily lying in the relative boundary of $\mathcal{F}$. This matrix $Z$ therefore has rank at most $r-1$, a contradiction since $Z$ yields a 
	realization of $G$ in $\R^{r-1}$.
\end{proof}
\smallskip

In principle, one may now apply any off-the-shelf SDP 
solver to solve problem~\eqref{eqn:SDP_conv}. 
The effectiveness of such methods, however, 
depends heavily on the ``conditioning'' of the SDP 
system. 
In particular, if the system admits no feasible 
positive definite matrix, as is often the case 
(Observation~\ref{obs:exa}), 
then no standard method can be guaranteed to perform 
very well nor be robust to perturbations in the 
distance measurements. 

\subsection{Constraint mapping and the centering issue.}
To simplify notation, we will reserve some symbols for the mappings and sets appearing in formulations \eqref{eqn:main_eq} and  \eqref{eqn:SDP_conv}. To this end, define 
the mapping
$\KK : \Sn \rightarrow \Sn$ by  
\index{Lindenstrauss mapping, $\KK$}
\index{$\KK$, Lindenstrauss mapping}
\[
\KK(X)_{ij}:=X_{ii}+X_{jj}-2X_{ij}.
\]
The adjoint $\KK^*\colon\mathcal{S}^n\to\mathcal{S}^n$ is given by
 $$\KK^*(D)=2(\Diag(De)-D).$$
Moreover, the {\em Moore-Penrose pseudoinverse} of 
$\KK$ is easy to describe: 
for any matrix $D\in\mathcal{S}^n$ having all-zeros on 
the diagonal (for simplicity), we have
\[
\KK^{\dag}(D)=-\frac{1}{2}J \cdot D\cdot J,
\]
where $J:=I-\frac{1}{n}ee^T$ is the projection onto 
$e^{\perp}$. 
These and other related constructions have appeared in a 
number of publications; see for example 
\cite{homwolkA:04, MR97h:15032, hwlt91, MR2653818, MR2890931, MR2166851, MR2549047, MR1366579, MR2357790}.

Consider now the sets of {\em centered symmetric}, {\em centered PSD}, and {\em centered PSD low-rank} matrices 
\index{centered sets}
\index{$\Ss_c^n$, centered symmetric matrices}
\index{$\Ss_{c,+}^n$, centered PSD matrices}
\index{$\Ss_{c,+}^{n,r}$, centered PSD matrices of rank $\leq r$}
\begin{align*}
\Ss^n_{c}&:= \{X \in \Sn: Xe=0\},\\
\Ss^n_{c,+}&:= \{X \in \Sn_+: Xe=0\},\\
\Ss^{n,r}_{c,+}&:=\{X\in \Ss^n_{c,+}:\rank X\leq r\}. 
\end{align*}
Define now the coordinate projection $\PP\colon\mathcal{S}^n\to\R^E$ by setting $\PP(X)_{ij}=X_{ij}$.
In this notation, the feasible set \eqref{eqn:main_eq} can equivalently be written as $\{X\in\Ss^{n,r}_{c,+}: \PP\circ\mathcal{K}(X)=d\}$ while the relaxation \eqref{eqn:SDP_conv} is then $\{X\in\Ss^{n}_{c,+}: \PP\circ\mathcal{K}(X)=d\}$.

It is easy to see that $\Ss^{n}_{c,+}$ is a face of 
$\Snp$, and is linearly isomorphic to $\Ss^{n-1}_+$. 
Indeed, the matrix $ee^T$ exposes $\Ss^{n}_{c,+}$. 
More specifically, 
for any $n\times n$ orthogonal matrix $ \begin{bmatrix}
\frac{1}{\sqrt n}e & U  \cr
\end{bmatrix}$,
we have the representation
\begin{equation}\label{eqn:amb}
\Ss^{n}_{c,+}= U\mathcal{S}^{n-1}_+U.
\end{equation}
Consequently, we now make the following important convention:
the {\em ambient space} of $\Ss^{n}_{c,+}$ will always be 
taken as $\Ss^n_c$. 
The notion of faces of $\Ss^{n}_{c,+}$ and the 
corresponding notion of exposing matrices naturally 
adapts to this convention by appealing to \eqref{eqn:amb} 
and the respective standard notions for 
$\mathcal{S}^{n-1}_+$. Namely, we will say that $\mathcal{F}$ 
is a face of $\Ss^{n}_{c,+}$ if it has the form 
$\mathcal{F}=U\widehat{\mathcal{F}}U^T$
for some face $\widehat{\mathcal{F}}$ of 
$\mathcal{S}^{n-1}_+$, 
and that a matrix $Y$ exposes $\mathcal{F}$ whenever it 
has the form $U\widehat{Y}U^T$ for some matrix 
$\widehat{Y}$ exposing $\widehat{\mathcal{F}}$.

\section{Robust facial reduction for EDM completions.}
\label{sect:rfr}

In this section, we propose the use of robust facial reduction
for solving the least-squares formulation of the nonconvex EDM 
completion problem \eqref{eqn:main_eq}:
\begin{equation}\label{eqn:ranklss}
\begin{array}{ll}\displaystyle
\text{minimize} &\sum_{ij\in \EE} |X_{ii}+X_{jj}-2X_{ij} - d_{ij}|^2
\\
\text{subject to}& X\in\Ss_{c,+}^{n,r} .
\end{array}
\end{equation}
The main idea is to use the dual certificates arising from the 
rigid structures of the graph to construct a positive 
semidefinite matrix $Y$ of rank at least $n-r$, and then solve 
the convex optimization problem:
\begin{equation*}
\begin{array}{ll}\displaystyle
\text{minimize} &\sum_{ij\in \EE} |X_{ii}+X_{jj}-2X_{ij} - d_{ij}|^2
\\
\text{subject to}&X\in\Ss_{c,+}^n\cap Y^\perp.
\end{array}
\end{equation*}

Before describing our algorithmic framework for tackling 
\eqref{eqn:ranklss}, it is instructive to put it into context.
The authors of \cite{KH:2010} found a way to use 
the degeneracy of the system \eqref{eqn:main_eq} 
explicitly to design a combinatorial algorithm for 
solving \eqref{eqn:main_eq}, under reasonable conditions. 
The authors observed that each 
$k$-clique in the graph $\GG$, with $k >r$, certifies 
that the entire feasible region of the convex
relaxation \eqref{eqn:SDP_conv} 
lies in a certain proper face $\mathcal{F}$ of the positive 
semidefinite cone $\mathcal{S}^n_+$. 
Therefore, the \emph{facial reduction} technique
of replacing $\mathcal{S}^n_+$ by the smaller set 
$\mathcal{F}$ can be applied on \eqref{eqn:SDP_conv} 
to obtain an equivalent problem involving fewer variables.
On a basic level, their method explores cliques in the graph, while 
possibly growing them, and intersects pairwise such 
faces in a computationally effective way. 

An important computational caveat of the facial reduction algorithm 
of \cite{KH:2010} is that the algorithm is highly unstable when 
the distance measurements are corrupted by noise---a ubiquitous 
feature of the EDM completion problem in virtually all applications. 
The reason is simple: randomly perturbed faces of the 
semidefinite cone typically intersect only at the origin. 
Hence small perturbations in the distance measurements 
will generally lead to poor guesses of the face intersection 
arising from pairs of cliques. Moreover, even if pairs of 
cliques can robustly yield some facial information, 
the accumulated error compounds as the algorithm moves from 
clique to clique.
Remarkably, we show that this difficulty can be overcome by using ``dual'' 
representations of faces to aggregate the noise.  Indeed, the salient feature of the dual representation is that 
it is much better adapted at handling noise. 

Before proceeding with the details of the proposed 
algorithmic framework, we provide some intuition.
To this end, an easy computation shows that if
$Y_i$ exposes a face $\mathcal{F}_i$ of $\mathcal{S}^n_+$ 
(for $i=1,\ldots,m$), 
then the sum $\displaystyle\sum_{i}Y_i$ exposes the intersection 
$\displaystyle\bigcap_{i}\mathcal{F}_i$.
Thus the faces $\mathcal{F}_i$ 
intersect trivially if and only if the sum 
$\displaystyle\sum_{i}Y_i$ is positive definite. On the other hand, 
if the true exposing vectors 
arising from the cliques are corrupted by noise, 
then one can round off the small eigenvalues of 
$\displaystyle\sum_{i}Y_i$ (due to noise) to guess at the true intersection 
of the faces arising from the noiseless data.

\subsection{The algorithmic framework.}
To formalize the outlined algorithm, we will need the following basic result, which
in a primal form was already the basis 
for the algorithm in \cite{KH:2010}. The dual form, however, is essential for our purposes.
For easy similar alternative proofs, see  
\cite[Theorem 4.9]{coor} and \cite[Theorem 4.1]{krislock:2010}.
Henceforth, given a clique $\alpha\subseteq\VV$ (meaning, a subset of vertices such that every two are adjacent), we use $d_{\alpha}\in\Ss^{|\alpha|}$
to denote the symmetric matrix formed from 
restricting $d$ to the edges between
the vertices in $\alpha$.

\smallskip
\begin{thm}[One clique facial reduction] 
	\label{thm:EDM_face} 
	Suppose that the subset of vertices $\alpha:=\{1,\ldots,k\}\subset\VV$ 
    is a clique in $G$.
Define the set 
	$$
	\begin{array}{rcl}
	\widehat{\Omega} &:= &
	\{X\in \Ss^n_{c,+}:  [\KK(X)]_{ij}=d_{ij} 
	\quad\textrm{ for all }\quad 1\leq i< j\leq k\}.       
	\end{array}
	$$
	Then for any matrix $\widehat{Y}$ exposing 
	$\face\big(\KK^{\dag}d_{\alpha};\Ss^k_{c,+} \big)$,
	$$
	\textrm{the matrix}\ \ 
	\begin{bmatrix}
	\widehat{Y} & 0  \cr \\
	0 & 0
	\end{bmatrix}       
	\ \  \textrm{ exposes }\ \  \face(\widehat{\Omega};\Ss^n_{c,+}).
	$$
\end{thm}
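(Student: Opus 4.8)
The plan is to prove the single set identity
$\face(\widehat{\Omega};\Ss^n_{c,+}) = Y^{\perp}\cap\Ss^n_{c,+}$, where I abbreviate $Y:=\bigl[\begin{smallmatrix} \widehat{Y} & 0\\ 0 & 0\end{smallmatrix}\bigr]$; by definition this identity is exactly the assertion that $Y$ exposes $\face(\widehat{\Omega};\Ss^n_{c,+})$. First I would record the easy structural facts. Since $\widehat{Y}$ exposes a face of $\Ss^k_{c,+}$ it satisfies $\widehat{Y}\succeq 0$ and $\widehat{Y}e=0$, so $Ye=0$ and $Y\in\Ss^n_{c,+}$; thus $Y$ is a legitimate exposing vector in the centered ambient space $\Ss^n_c$. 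Throughout I write $Z:=\KK^{\dag}d_{\alpha}$, let $J_k:=I-\tfrac1k ee^T\in\Ss^k$ be the projection onto $e^{\perp}$ in $\R^k$, and for $X\in\widehat{\Omega}$ let $X_{\alpha}$ denote the leading $k\times k$ principal block. I note up front the standing hypothesis that $\widehat{\Omega}\neq\emptyset$, which is equivalent to $Z\succeq 0$ (any feasible $X$ has $X_\alpha\succeq 0$, and $J_kX_\alpha J_k=Z$); this is precisely the case in which $\face(Z;\Ss^k_{c,+})$ and hence the statement make sense.

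The computational core is the identity $-\tfrac12 J_k\,\KK(W)\,J_k=J_kWJ_k$, valid for every $W\in\Ss^k$. It follows immediately from $\KK(W)=we^T+ew^T-2W$, where $w$ is the vector of diagonal entries of $W$, together with $J_ke=0$. Applying it to $W=X_{\alpha}$ and using $\KK(X_{\alpha})=d_{\alpha}$ gives $J_kX_{\alpha}J_k=-\tfrac12 J_k d_{\alpha} J_k=Z$. This yields the easy inclusion at once: for any $X\in\widehat{\Omega}$, since $\langle Y,X\rangle=\langle\widehat{Y},X_{\alpha}\rangle$ and $\widehat{Y}=J_k\widehat{Y}J_k$ (because $\widehat{Y}e=0$), I can pass the projections onto $X_{\alpha}$ to obtain $\langle Y,X\rangle=\langle\widehat{Y},J_kX_{\alpha}J_k\rangle=\langle\widehat{Y},Z\rangle=0$, the last equality because $Z$ lies in the face $\face(Z;\Ss^k_{c,+})$ that $\widehat{Y}$ exposes. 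Hence $\widehat{\Omega}\subseteq Y^{\perp}\cap\Ss^n_{c,+}$, and as the right-hand side is a face, $\face(\widehat{\Omega};\Ss^n_{c,+})\subseteq Y^{\perp}\cap\Ss^n_{c,+}$. Since $X,Y\succeq 0$ this also forces $YX=0$, so $\range X\subseteq\ker Y$, and combined with $Xe=0$ we get $\range X\subseteq W_0:=\ker Y\cap e^{\perp}$ for every $X\in\widehat{\Omega}$.

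The substantive part is the reverse inclusion, for which I would exhibit one matrix $X^{\star}\in\widehat{\Omega}$ of maximal rank, namely with $\range X^{\star}=W_0$. First I compute $\dim W_0$. Writing $s:=\rank Z$, the centeredness $Ze=0$ gives $\range Z\subseteq e^{\perp}$, and a rank count shows $\ker\widehat{Y}=\range Z\oplus\mathrm{span}(e)$, of dimension $s+1$; therefore $\ker Y=(\range Z\oplus\mathrm{span}(e))\times\R^{n-k}$ has dimension $s+1+(n-k)$. Since the all-ones vector $e\in\R^n$ lies in $\ker Y$ but not in $e^{\perp}$, intersecting with $e^{\perp}$ removes exactly one dimension, so $\dim W_0=s+n-k$. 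To attain this rank I would build an explicit point configuration in $\R^{s+(n-k)}$: factor $Z=QQ^T$ with $Q\in\R^{k\times s}$ to get clique points $q_1,\dots,q_k\in\R^s$ realizing $d_{\alpha}$, place them in the first $s$ coordinates, and place the remaining $n-k$ points so that the whole set affinely spans all of $\R^{s+(n-k)}$ (for instance at $q_1$ shifted by the successive standard basis vectors of the extra $\R^{n-k}$). Letting $\widetilde{X}$ be the Gram matrix of this configuration, I set $X^{\star}:=J\widetilde{X}J$ with $J=I-\tfrac1n ee^T$. Because $\KK$ depends only on translation-invariant pairwise distances, $X^{\star}$ still satisfies $\KK(X^{\star}_{\alpha})=d_{\alpha}$; it is centered and positive semidefinite; and $\rank X^{\star}$ equals the affine dimension $s+n-k$ of the configuration. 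Thus $X^{\star}\in\widehat{\Omega}$, and the easy inclusion gives $\range X^{\star}\subseteq W_0$, whence $\range X^{\star}=W_0$ by equality of dimensions.

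With $X^{\star}$ in hand the proof closes quickly: every $X\in\widehat{\Omega}$ obeys $\range X\subseteq W_0=\range X^{\star}$, so $X^{\star}$ is a maximal-rank element of $\widehat{\Omega}$, and by the characterization of the minimal face through a maximal-rank point recorded in Section~\ref{sect:problem_statement},
$\face(\widehat{\Omega};\Ss^n_{c,+})=\face(X^{\star};\Ss^n_{c,+})=\{X\in\Ss^n_{c,+}:\range X\subseteq W_0\}=Y^{\perp}\cap\Ss^n_{c,+}$.
I expect the main obstacle to be this reverse inclusion, and specifically the two-part bookkeeping it requires: pinning $\dim W_0$ to exactly $s+n-k$ (correctly crediting the extra $+1$ from $e\in\ker\widehat{Y}$ and debiting the $-1$ from the global centering constraint), and then producing a configuration that attains this affine dimension while honouring the clique distances. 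The easy inclusion, by contrast, is entirely driven by the projection identity of the second paragraph.
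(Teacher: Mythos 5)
Your proof is correct. Note, however, that the paper itself does not prove Theorem~\ref{thm:EDM_face}: it defers to \cite[Theorem 4.9]{coor} and \cite[Theorem 4.1]{krislock:2010}, so there is no in-paper argument to compare against line by line. Your route is essentially the standard one from those references, recast directly in the dual (exposing-vector) language that this paper actually needs: the identity $J_k\KK(W)J_k=-2J_kWJ_k$ gives the easy inclusion $\widehat{\Omega}\subseteq Y^{\perp}\cap\Ss^n_{c,+}$, and the reverse inclusion is settled by exhibiting a maximal-rank feasible point whose range fills $\ker Y\cap e^{\perp}$, which is exactly the primal face computation carried out in \cite{krislock:2010} (there stated as $\face(\widehat\Omega;\Ss^n_{c,+})=U\mathcal{S}^{s+n-k}_+U^T$ for an explicit $U$). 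Two details you handle that are worth keeping explicit: the implicit standing hypothesis $\KK^{\dag}d_{\alpha}\succeq 0$ (equivalently $\widehat{\Omega}\neq\emptyset$), without which the statement is vacuous, and the bookkeeping $\dim(\ker Y\cap e^{\perp})=s+n-k$, crediting the extra dimension from $e\in\ker\widehat{Y}$ under the paper's convention that exposing vectors of faces of $\Ss^k_{c,+}$ annihilate $e$. The only cosmetic gap is that you use $\KK(\KK^{\dag}d_{\alpha})=d_{\alpha}$; this is immediate from the fact that $\KK$ restricted to $\Ss^k_c$ is a bijection onto the hollow matrices, but it deserves one sentence.
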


In particular, under the assumptions of the theorem, the 
entire feasible region of \eqref{eqn:SDP_conv} is contained  
in the face of $\Ss^n_{c,+}$ exposed by $\begin{bmatrix}
	\widehat{Y} & 0  \cr \\
	0 & 0
\end{bmatrix}$. The assumption that the first $k$ vertices formed a 
clique is of course made without loss of generality.
We can now state our proposed algorithmic framework, 
in Algorithm \ref{alg_main} below.
\index{Euclidean distance matrix, EDM}
\index{EDM, Euclidean distance matrix}

\begin{algorithm}
	\caption{Basic strategy for EDM completion
		\label{alg_main}}
	\begin{algorithmic}
		\STATE{\textbf{{INPUT}:} 
			A weighted graph $\GG=(\VV,\EE,d)$, 
            and a target rank $r\geq 0$;} 
		\STATE{\textbf{{PREPROCESSING}}}: 
		\begin{enumerate}
			\item Generate a set of cliques $\Theta$ in $G$; 
		\item Generate a set of weight functions 
			$\{\omega_{\alpha}\colon{\R}^{\EE}\to{\R}_+\}_{\alpha\in\Theta}$;
		\end{enumerate}
		\FOR{each clique $\alpha$ in $\Theta$}{
			\STATE{$k\leftarrow$ $|\alpha|$;}
			\STATE{$X_{\alpha} \leftarrow$ a nearest matrix in 
				$\Ss^{k,r}_{c,+}$ to $\KK^{\dag}d_{\alpha}$;}
			\STATE{$W_{\alpha} \leftarrow$ exposing vector 
				of $\face(X_{\alpha}, \Ss^{k}_{c,+})$ extended to $\Ss^n$ by padding zeros;}
		} \ENDFOR
		\STATE{$W\leftarrow \sum_{\alpha\in\Theta} 
			\omega_{\alpha}(d) \cdot W_{\alpha}$;}
		\STATE{$Y\leftarrow$ a nearest matrix in $\Ss^{n,n-r}_{c,+}$ to $W$;}
		\STATE{$X\leftarrow$ a solution of }
		\begin{equation}\label{eqn:small_sys}
		\begin{array}{lll}
		\text{minimize} &   \|\PP\circ\KK(X) - d\| \\
		\text{subject to}   &    X\in  Y^{\perp}\cap\Ss^{n}_{c,+};
		\end{array}
		\end{equation}
		\RETURN $X$;
	\end{algorithmic}
\end{algorithm}



Some comments are in order. First, there is great flexibility in the 
preprocessing stage, and it will be described in Subsection~\ref{sect:clique_set}. 
Secondly, finding ``nearest matrices'' in $\Ss^{k,r}_{c,+}$ and in 
$\Ss^{n,n-r}_{c,+}$ is easy as a result of the Eckart-Young theorem. 
The details are worked out in Appendix~\ref{sect:nearest_point}. 
Solving the small dimensional least squares problem \eqref{eqn:small_sys} 
is also standard. We discuss it in Appendix~\ref{sect:lss}. 
In fact, very often (under the assumptions of Theorem~\ref{thm:robust} 
below) the linear least squares solution of 
$
\displaystyle{\min_{X\in\mathcal{V}}}\,   \|\PP\circ\KK(X) - d\|
$
already happens to be positive definite, where $\mathcal{V}$ denotes the linear span of the face $Y^{\perp}\cap\Ss^{n}_{c,+}$. Hence this step typically does not require any optimization solver to be invoked. Indeed, this is a direct consequence of the rudimentary robustness guarantees of the method, outlined in Appendix~\ref{sec:robustness}.


\subsection{Implementing facial reduction for noisy EDM.}

In the following, we elaborate on some of the main
ingredients of Algorithm \ref{alg_main}: 
\begin{itemize}
	\item
	the choice of the clique set $\Theta$ and weight functions 
	$\{\omega_\alpha\}_{\alpha\in\Theta}$ 
	(in Section \ref{sect:clique_set});
	\item 
	the nearest-point mapping to $\Ss^{k,r}_{c,+}$
	(in Appendix \ref{sect:nearest_point}); and
	\item 
	the solution of the least squares problem
	\eqref{eqn:small_sys}
	(in Appendix \ref{sect:lss}).
\end{itemize}
To improve the solution quality of Algorithm \ref{alg_main},
we perform a postprocessing \emph{local refinement} step:
we use the solution $X$ from Algorithm \ref{alg_main} as an 
initial point for existing nonlinear optimization methods
to find a local solution of \eqref{eqn:ranklss}. 
While general nonlinear optimization methods often fail
to find a global optimal solution, when used as a local refinement
procedure they can greatly improve
the solution quality of Algorithm \ref{alg_main}.

\subsubsection{Choosing the clique set and the weights.}
\label{sect:clique_set}

We first discuss the choice of the clique set $\Theta$, 
which is crucial for the success of 
Algorithm \ref{alg_main}, since the exposing vector $Y$ 
is formed based on the clique information.
The level of rigidity of the graph known to Algorithm \ref{alg_main}
is determined by the clique set $\Theta$: if insufficiently many cliques are
present, then the estimate of the exposing vector $Y$ will likely be poor.

In practice, it is inefficient to compute the set of 
\emph{all} cliques of $\GG$ (noting that 
determining whether a graph has a clique of an arbitrary 
given size is NP-hard),
so we can only hope to find a subset of cliques of the graph.
We apply a simple brute-force subroutine 
on the adjacency matrix $H$ of the given graph
to find a collection $\Theta$ of cliques, as in Algorithm
\ref{algo:choosingCliques} below.

\begin{algorithm}
\caption{Finding a collection of cliques in a graph\label{algo:choosingCliques}}
\begin{algorithmic}
\STATE{\textbf{(INPUT) }
A simple graph $\GG=(\VV,\EE)$, 
integer $\bar k\geq2$;}
\STATE{\textit{Step 1: constructing $\Theta_1$ and $\Theta_2$}}
\STATE{$\Theta_1\leftarrow \emptyset$;}
\FOR{each vertex $v=1,\ldots,n$}{
  \STATE{find a subset $\alpha_v$ of neighbors of $v$ that forms a 
clique;}
  \STATE{$\Theta_1\leftarrow \Theta_1\cup\{\alpha_v\}$;}
} \ENDFOR
  \STATE{$\Theta_2\leftarrow\{uw\in \EE: \nexists\, \alpha\in\Theta_1
  \,\textrm{ such that }\,u,w\in \alpha\}$;}
\STATE{\textit{Step 2: constructing $\Theta_3,\ldots,\Theta_{\bar k}$}}
\FOR{$k=3,\ldots,\bar k$}{
  \STATE{$\Theta_k\leftarrow\emptyset$;}
  \FOR{each $\alpha\in\Theta_{k-1}$}{
    \IF{all the vertices in $\alpha$ share a common neighbor $v$}
    {
      \STATE{$\Theta_k\leftarrow \Theta_k \cup \{\alpha\cup \{v\}\,\}$;}
    } \ENDIF
  } \ENDFOR
} \ENDFOR
\STATE{$\Theta \leftarrow \bigcup_{k=1}^{\bar k} \Theta_k$\;}
\STATE{\textbf{(OUTPUT) }$\Theta$, a set of cliques in $\GG$
of size up to $\bar k$.}
\end{algorithmic}
\end{algorithm}


In the first step, computing $\Theta_1$ is
very fast since it involves only repeatedly removing
rows and columns of $H(\delta_v,\delta_v)+I$ 
that contain zero for each vertex $v$
(where $H$ is the $\{0,1\}$-adjacency matrix of $\GG$ 
having zero diagonal and $\delta_v$ is the set of neighbors of 
$v$). As for the second step,
while the brute-force method of listing all
cliques of fixed sizes would be prohibitive in practice, 
we find that the restriction imposed by $\Theta_1$
cuts down a huge number of smaller non-maximal cliques 
that we need to keep track of, 
and the use of $\Theta_1$ significantly 
speeds up the second step.

Algorithm \ref{algo:choosingCliques} provides a very basic 
clique-selection framework.
When working with a particular application (e.g., the 
sensor network localization problem, described in 
Section \ref{sect:numerics} below), the robustness of 
Algorithm \ref{alg_main} can be improved significantly 
when the clique selection process is specialized for 
that application. 

Now we discuss the weight functions $\{\omega_{\alpha}\}_{\alpha
\in\Theta}$. In Algorithm \ref{alg_main},
we do not treat each clique in $\Theta$ equally, 
given that the noise in the distance measurements does not have 
to be uniform and it may not be possible to recover all the 
cliques with the same level of accuracy. 
We gauge the amount of noise present in the distance measurements
of cliques as follows:
for each clique $\alpha\in\Theta$, 
as before letting $d_\alpha\in\mathcal{S}^{\alpha}$
be the restriction of the distance measurements $d$ to the clique,
we estimate the noise present in $d_\alpha$ by considering 
the eigenvalues of $\mathcal{K}^\dagger d_\alpha$:
\begin{equation}
\label{eq:noisedef}
\noise_\alpha(d) := \frac{ \sum_{j=1}^{|\alpha|-r} 
	\lambda^2_j(\mathcal{K}^{\dagger} d_\alpha)}
{0.5|\alpha|(|\alpha|-1)}.
\end{equation}
Here $\lambda_j(\mathcal{K}^{\dagger} d_\alpha)$ refers to the $j$'th smallest eigenvalue of the matrix $\mathcal{K}^{\dagger} d_\alpha$. 
The value $\nu_{\alpha}(d)$ is the scaled squared-$\ell_2$ norm 
of the violation of the rank constraint in the clique $\alpha$, 
i.e., the constraint $\rank(\mathcal{K}^\dagger d_\alpha )\leq r$.
In the case where no noise is present in the distance measurements
$d$, we have $\noise_\alpha(d)=0$ since the matrix 
$\mathcal{K}^\dagger d_\alpha\in\mathcal{S}^{|\alpha|}_+$ 
is of rank at most $r$.
To each clique $\alpha$, we assign the weight
\[
\omega_\alpha(d)
:= 
1- \frac{ \noise_\alpha(d)}
{\sum_{\beta\in\Theta} \noise_\beta(d)}.
\]
This choice of weight reflects the 
contribution of noise in the clique $\alpha$ to the total noise
of all cliques (where the noise is measured by \eqref{eq:noisedef}).
If a clique $\alpha$ is relatively noisy compared to 
other cliques in $\Theta$ or contains an outlier, 
the weight $\omega_{\alpha}(d)$ would be 
smaller than $\omega_{\beta}(d)$ for most $\beta\in\Theta$.

\subsubsection{Postprocessing: local refinement.}
\label{sect:postprocessing}

Following Algorithm \ref{alg_main}, 
we implement a \emph{local refinement}, 
which could greatly improve the solution quality.
By local refinement, we mean the use of a nonlinear 
optimization algorithm
for solving the nonconvex problem \eqref{eqn:ranklss} 
(which has a lot of local minima) using
the output of Algorithm \ref{alg_main} as the initial point.
Local refinement has been commonly used for SDP-based algorithms for SNL
problems and noisy EDM completion problem; see \cite{BYACM:06, BYSJC:08}.
\index{local refinement}

For local refinement, 
we use the steepest descent subroutine from the SNL-SDP 
package \cite{BYIEEE:06}. Suppose that $X^*=P^*(P^*)^T$
is the solution of 
\eqref{eqn:small_sys} found at the end of Algorithm
\ref{alg_main}. We use $P^*$ as an initial point for
the steepest descent method to solve 
the nonlinear optimization problem
\begin{equation}
\label{eqn:ranklss2}
   \min_{P\in\R^{n\times r}}
   \|\PP\circ\mathcal{K}(PP^T)-d\|^2.
\end{equation}
By itself, the steepest descent method usually fails to 
find a \emph{global} optimal solution of \eqref{eqn:ranklss2}
and instead gets trapped at one of the many critical points,
since the problem is highly nonconvex. 
On the other hand, we observe that Algorithm \ref{alg_main} can produce excellent initial points for such nonlinear optimization schemes. 

%

\subsection{Application on the sensor network localization
problem.}\label{sect:numerics}

\renewcommand{\arraystretch}{1.3}

In this section, we apply robust facial reduction
(Algorithm \ref{alg_main}) on 
the anchorless \emph{sensor network localization} (SNL) 
problem in $\mathbb{R}^2$. 
The task is to locate $n$ wireless sensors in $\mathbb{R}^2$, given 
the noisy squared Euclidean distances between sensors that are within 
a given radio range of each other.
Semidefinite programming techniques have been used extensively for the SNL problem; see for example 
\cite{BYACM:06,BYIEEE:06,BYSJC:08,BY:04,by03,KH:2010,sy07,pt09,wzyb08}. 

One important characteristic of the SNL problem is 
the presence of a radio range:
the distance between two sensors is available if and only if
the distance is no larger than the radio range.
This simple feature allows us to specialize the
preprocessing step of finding the clique set $\Theta$ 
in the basic robust facial reduction 
framework, using the 
\emph{nonrigid clique union technique} from \cite{KH:2010}: 
we refine the feasible region in the EDM completion
problem by removing solutions that violate the implicit
constraints imposed by the radio range.
This step approximately completes the partial EDM \emph{locally}, 
while generating a set of larger cliques and 
reducing the error in calculating the exposing vector 
using the noisy distance measurements.

\subsubsection{Preprocessing via clique union.}\label{subsubsec:clique_union}

The first step is to determine an ordering of the cliques
$\Theta = \{\alpha_1,\ldots,\alpha_{|\Theta|}\}$
such that
\begin{equation}
\label{eq:clique_union}
\text{
	$\alpha_{j-1}$ and $\alpha_{j}$ intersect 
	in at least 2 vertices,
}\ \forall\, j.
\end{equation}
Such an ordering can be found using a greedy approach:
start with the largest clique $\alpha_1\in\Theta$ and 
$\widehat\Theta:=\Theta$,
and for each $j\geq2$, 
pick $\alpha_j$ 
among all the cliques in $\widehat\Theta$ intersecting 
with $\alpha_{j-1}$ in at least 2 vertices, the one that maximizes
the set difference $|\alpha_j \backslash \alpha_{j-1}|$;
then update $\widehat\Theta$ by removing from it 
all the cliques whose nodes are covered by 
$\bigcup_{l=1}^j \alpha_l$.

If the graph $\GG$ is sparse, such an ordering may not exist.
Nonetheless, as long as $\GG$ does not have a cut vertex,
it is possible to cover all the vertices of $\GG$
with multiple sequences of cliques, each satisfying the 
condition \eqref{eq:clique_union}.
(Note that if a noiseless SNL instance is uniquely $r$-localizable, in the sense of \cite{SoYe:05},
for any $r\geq2$, then the corresponding graph cannot have a cut vertex.)


Suppose that we have found an ordering of the elements of 
$\Theta$ satisfying the condition \eqref{eq:clique_union}.
Then we would perform a sequential clique union procedure,
whose goals are to ensure that the matrix $U$
found in Algorithm \ref{alg_main} is not too far from
$\Ss^{n,n-r}_{c,+}$, and to avoid errors arising from 
(nearly) nonrigid intersection, which we illustrate in 
the following example.

\smallskip
\begin{example} 
{\rm	\label{eg:1}
	Suppose that we have 5 sensors with radio range $0.05$,
	whose true locations are given by
	\[
	P=\begin{bmatrix}
	0.4582 &  0.4793 &  0.5031 &  0.4360 &  0.4467 \\
	-0.4116 & -0.3952 & -0.3221 & -0.3150 & -0.3393
	\end{bmatrix}^T.
	\]
	Then the corresponding graph is as in
	the left picture in Figure \ref{fig:example}:
	only the distance between sensors 1 and 5 is missing.
	The graph has two cliques $\alpha_1=\{1,2,3,4\}$ and
	$\alpha_2=\{2,3,4,5\}$. Sensors $2,3,4$ in the clique intersection
	are almost collinear; $\alpha_1$ and $\alpha_2$ almost intersect
	\emph{nonrigidly} (locally), in the sense that the realization
	of sensor locations:
	\[
	\tilde P=
	\begin{bmatrix}
	0.4582 &  0.4793 &  0.4360 &  0.4467 &  0.4051
	\\
	-0.4116 & -0.3952 & -0.3150 & -0.3393 & -0.3750
	\end{bmatrix}^T
	\]
	obtained by reflecting $\alpha_1$ along the
	line passing through vertices 2 and 3
    (in the center of Figure \ref{fig:example})
	would give almost the same partial EDM:
	\[
	   \left\|
          \mathcal{P}\bigl(\mathcal{K}(PP^T)-
          \mathcal{K}(\tilde P\tilde P^T)\bigr)\right\|
	\approx
	6.84\times 10^{-4},
	\]
	where $\mathcal{P}:\Ss^n\to\R^{\EE}$ is the canonical projection.
	In the presence of uncertainty in distance measurements,
	both $P$ and  $\tilde P$ seem to be
	reasonable realization of the sensor locations.
	Yet with the additional knowledge of the radio range, we know
	that it is unlikely $\tilde P$ gives the approximate sensor locations, since that would mean sensors 1 and 5 are in each other's radio
	range.

	\begin{figure}[!ht]
		\centering
		\caption{\small
			Left: true location of 5 sensors (given by $P$), 
			with edges indicating known distances.
			Center: realization of sensor location (given by $\tilde P$) 
			satisfying the known distances but 
			violating the radio range.
			Right: solution from Algorithm \ref{alg_main}.
			Circles {\textcolor{blue}{$\circ$}}\,: 
               clique $\alpha_1$; 
            pluses {\scriptsize\textcolor{red}{+}\,}: 
               clique $\alpha_2$.
			\label{fig:example}
		}
		\epsfig{file=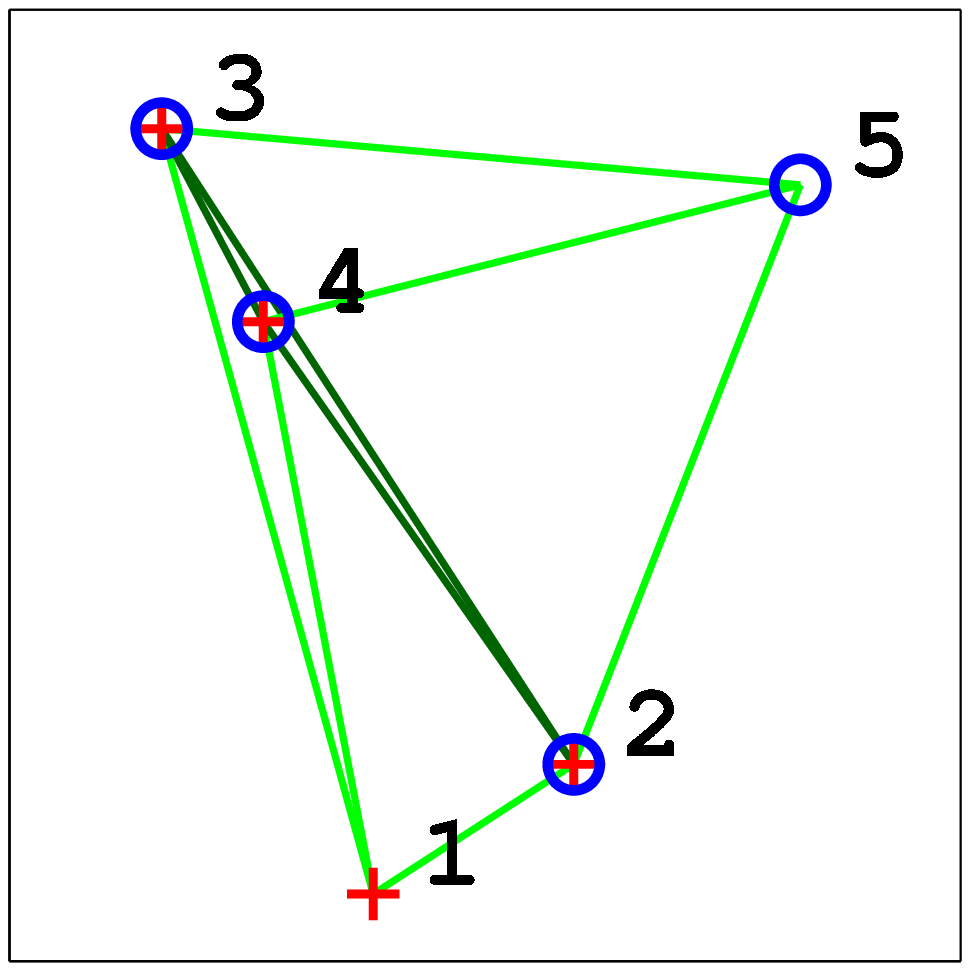, height=1.65in, width=1.65in}\,
		\epsfig{file=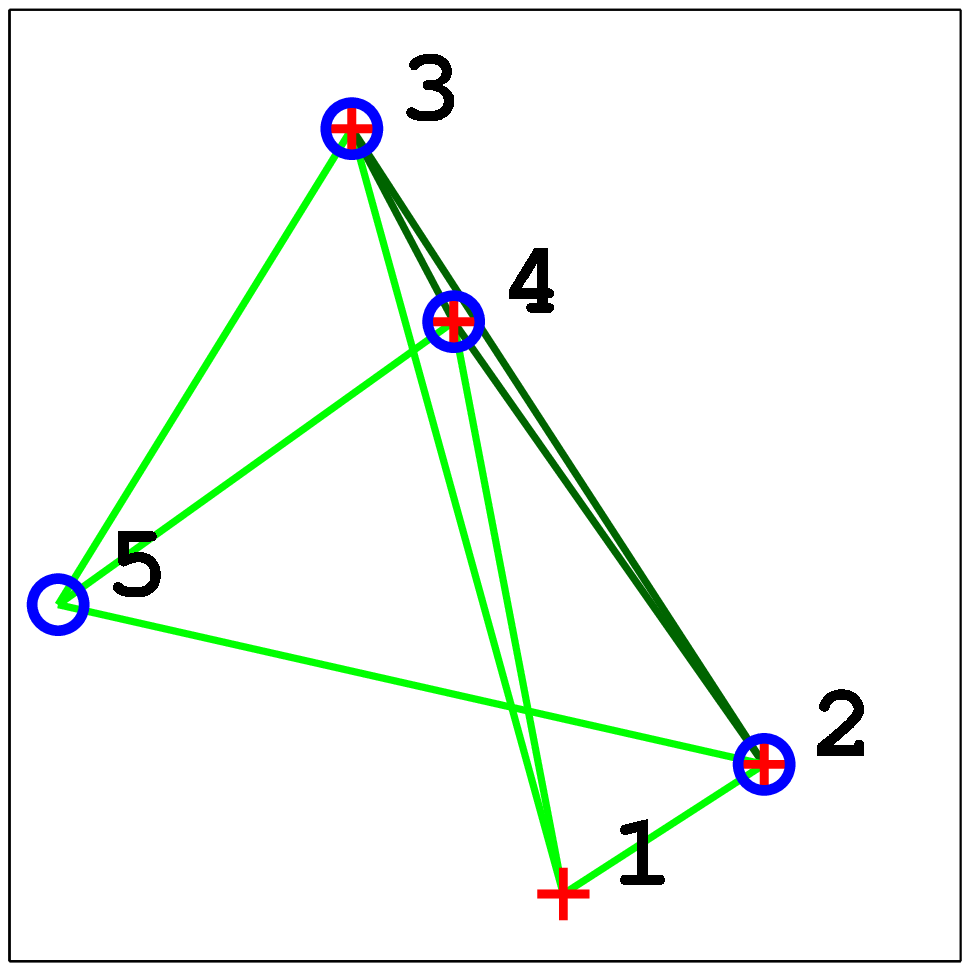, height=1.65in, width=1.65in}\,
		\epsfig{file=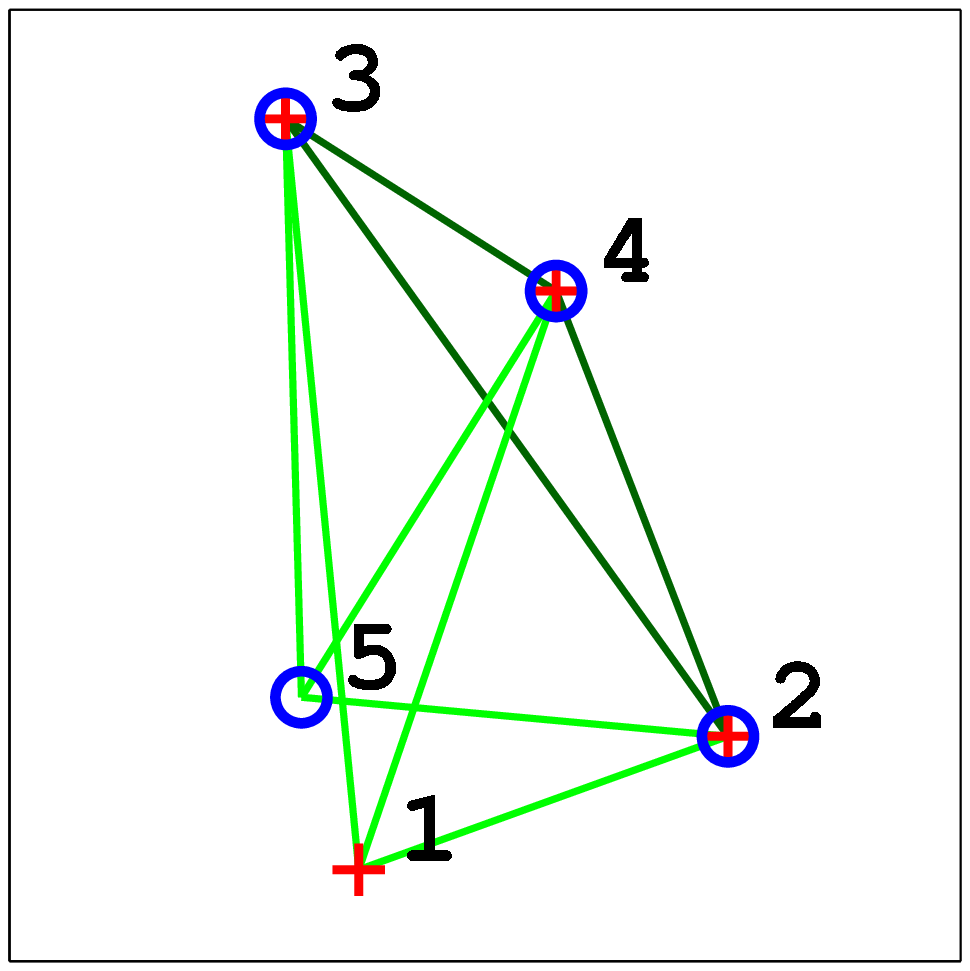, height=1.65in, width=1.65in}
	\end{figure}

	Now suppose that the distance measurements are corrupted with 5\%
	Gaussian noise (see the multiplicative noise model outlined in 
	Algorithm \ref{alg:multinoise}).
	If we apply Algorithm \ref{alg_main} on the noisy input, then
	the realization of sensor locations could be as in the right picture in
	Figure
	\ref{fig:example}, where sensors 1 and 5 are much closer
	than they should be.
	Note that
	the right picture in Figure \ref{fig:example}
	shows a minor perturbation
	of the ``incorrect'' realization $\tilde P$.}
\end{example}

\smallskip
Scenarios depicted in Example \ref{eg:1} can be quite prevalent:
two of the cliques in $\Theta$ may intersect (almost) nonrigidly (locally), 
and there would be two localizations that
give similarly good least squares solutions, corresponding to two 
different ``reflections''. 
To ensure the robustness of the facial reduction algorithm,
we perform a clique union on $\alpha_{j-1}$ and $\alpha_j$ 
(for each $j=2,\ldots,|\Theta|$), by using a Procrustes rotation
to match the cliques $\alpha_{j-1}$, $\alpha_j$ at the intersection
and to ensure also that the unknown distances calculated are not much
smaller than the radio range.
This constitutes a \emph{local EDM completion}: this approach
localizes the two cliques $\alpha_{j-1}$ and $\alpha_j$, and
as a result we obtain the distances between all the vertices in 
$\alpha_{j-1}\cup\alpha_j$. 
After we obtain a realization of $\alpha_{j-1}\cup\alpha_j$,
we use that realization to compute an exposing vector corresponding
to $\alpha_{j-1}\cup\alpha_j$.
This preprocessing step results in exposing vectors for the larger cliques
$\beta_j=\alpha_{j}\cup\alpha_{j+1}$ for $j=1,\ldots, |\Theta|-1$. 
The larger cliques
$\beta_1,\ldots,\beta_{|\Theta|-1}$ intersect at more vertices, 
lowering both the possibility that some of the clique intersections 
are nonrigid and the error of the exposing vector calculation. 

\subsubsection{Numerics.}

For the numerical tests, we generate random instances of the SNL 
problem based on a \emph{multiplicative noise model} 
(\cite{BYACM:06, BYIEEE:06}) outlined
in Algorithm \ref{alg:multinoise}.
\index{multiplicative noise model}
\begin{algorithm}
	\caption{Multiplicative noise model
		\label{alg:multinoise}}
	\begin{algorithmic}
		\STATE \textbf{INPUT: }
		\# sensors $n$,
		noise factor $\sigma$,
		radio range $R$;
		\STATE For each $i,j=1,\ldots,n$:
		\STATE \ - pick i.i.d. $p_i\in[-0.5,0.5]^{2}$ 
		with uniform distribution
		\STATE \ - pick i.i.d. $\epsilon_{ij}\in\mathcal{N}(0,1)$
		(standard normal distribution)
		\STATE Compute $D\in\Ss^n$ by
		\[
		D_{ij} = (1+\sigma\epsilon_{ij})^2\|p_i-p_j\|^2,
		\ \ \forall\, i,j=1,\ldots,n;
		\]
		\STATE \vspace{-1em} Build graph $\GG=(\{1,\ldots,n\}, \EE)$, where
		\[
		ij\in \EE \iff \|p_i-p_j\|\leq R;
		\]
		\STATE \vspace{-0.8em} $d\leftarrow [D_{ij}]_{ij\in \EE,\, i<j}\in\R^{\EE}$;
		\STATE  \textbf{OUTPUT: }
		noisy distance measurements $d\in\R^{\EE}$ and graph $\GG$.
	\end{algorithmic}
\end{algorithm}

Since the instances generated by the multiplicative noise model 
come with the true sensor locations,
we can gauge the performance of the robust facial reduction on
random instances from the multiplicative noise model using 
the \emph{root-mean-square deviation} (RMSD).
Suppose that the true centered locations of the sensors
are stored in the rows of the matrix $P\in\R^{n\times 2}$, 
and $X\in\Ss^{n,r}_{c,+}$ is the output of Algorithm \ref{alg_main}.
Then $X=\tilde P \tilde P^T$ for some $\tilde P\in\R^{n\times 2}$,
whose rows store the estimated centered locations.
The RMSD of the estimated
$\tilde P$ relative
to the true centered locations $P$ is defined as:
\index{root mean standard deviation, RMSD}
\index{RMSD, root mean standard deviation}
\begin{equation}\label{eqn:rmsd}
   RMSD := 
   \min\left\{\frac{1}{\sqrt{n}} \|\tilde P U - P\|_F\ \colon
      \ U^TU=I,\ U\in\R^{r\times r}
   \right\}.
\end{equation}

A typical output of Algorithm \ref{alg_main} applied 
on an instance generated by the multiplicative noise 
model is illustrated in Figure \ref{fig:typical}.
While the solution produced by Algorithm \ref{alg_main}
may not seem very impressive, with the help of standard local
refinement techniques we can attain very high quality solution 
even with the high number of sensors and in the presence of noise.
\begin{figure}[!ht]
	\centering
	\caption{\small Illustration of
		robust facial reduction with refinement applied on an 
		instance with $1000$ sensors (no anchors) on a $[-0.5,0.5]^2$ box, 
		with noise factor $0.05$ and radio range $0.1$. 
		Top left: 
		 using Algorithm \ref{alg_main} without refinement 
            (RMSD$=28.48\% R$).
		Top right:
          using Algorithm \ref{alg_main} with refinement via 
            the steepest descent method 
            (RMSD$=1.05\% R$).
       Bottom:
		 using only the steepest descent method with a 
            randomly generated initial point 
            (RMSD$=399.45\%R$).
        Line segments represent discrepancy between estimated
        and true locations.
		\label{fig:typical}
	}
		\begin{subfigure}[b]{0.43\textwidth}
			\includegraphics[width=\textwidth, trim= 0.5 1 0.5 0.5, clip=true]{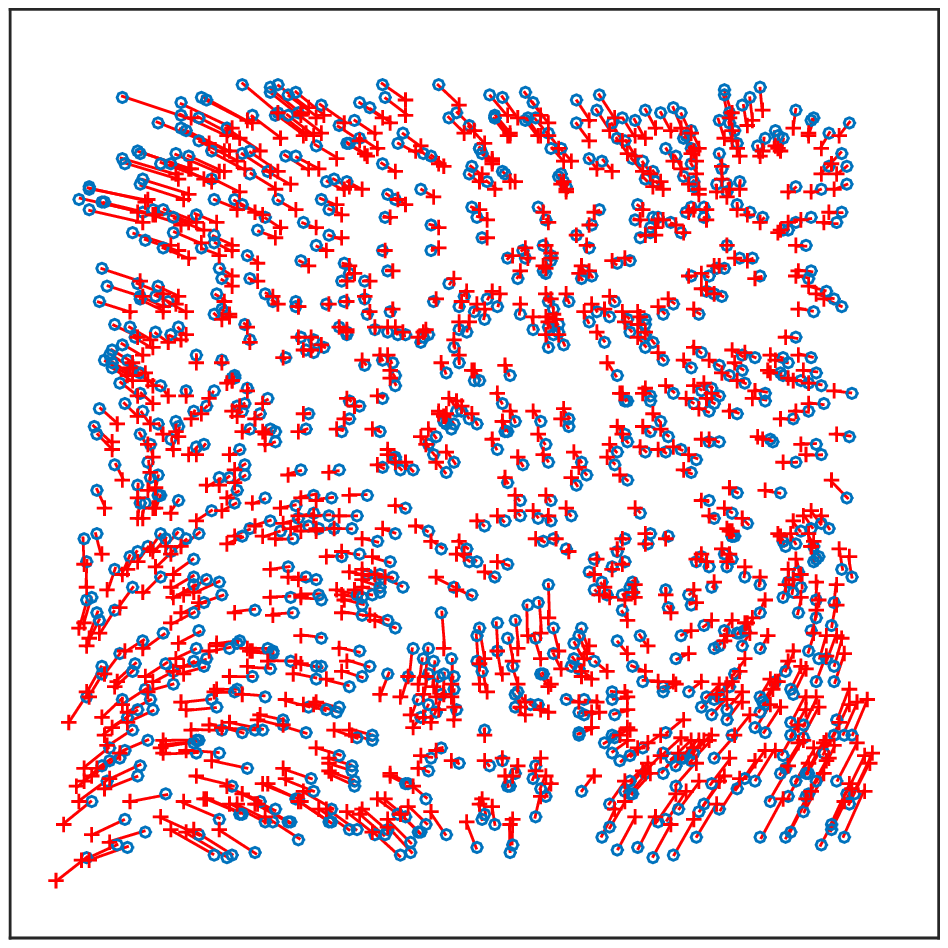}
		\end{subfigure}
		\quad 
		\begin{subfigure}[b]{0.43\textwidth}
			\includegraphics[width=\textwidth, trim= 0.5 1 0.5 0.5, clip=true]{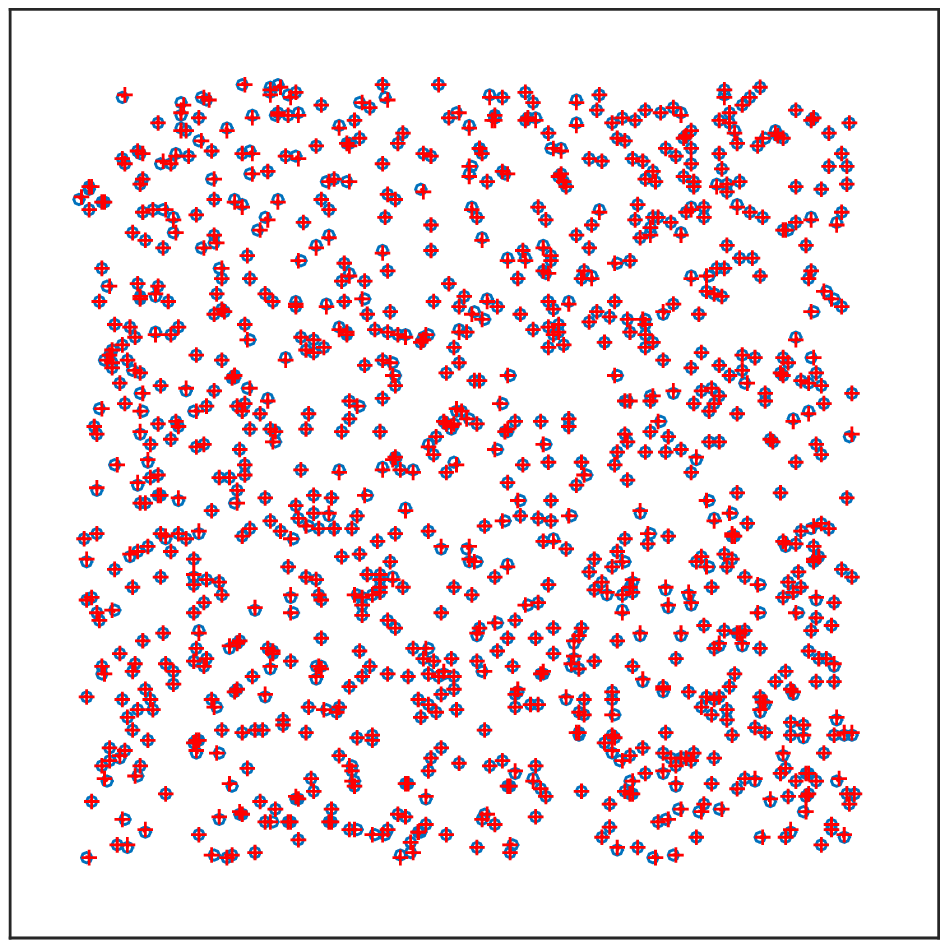}
		\end{subfigure}
		\\\vspace{5pt} 
		\begin{subfigure}[b]{0.43\textwidth}
			\includegraphics[width=\textwidth, trim= 0.5 0.5 0.5 1, clip=true]{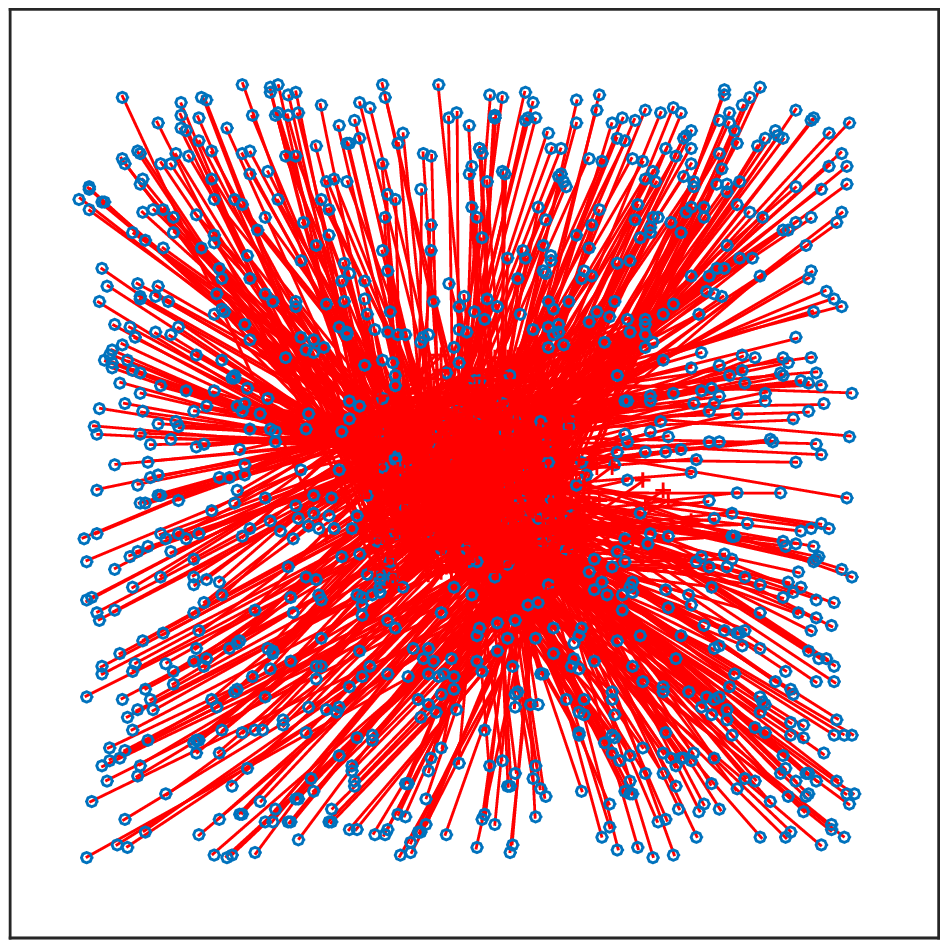}
		\end{subfigure}
	\end{figure}

\newpage
Table \ref{table1} shows some numerical results on instances with 1000 sensors
(and no anchors) generated
as in Algorithm \ref{alg:multinoise}, with varying noise factor 
and radio range.
The tests were run on MATLAB version R2014b, 
on a Linux machine with Intel(R) Core(TM) i7-4650U CPU @ 1.70GH and
8 GB RAM.
We show the RMSD (as a percentage of the radio range) 
of the solutions provided by Algorithm \ref{alg_main} (in the column
``initial''),
and also the RMSD of the solution after the local refinement using the steepest descent subroutine from SNL-SDP
(in the column ``refined'').
We see that using Algorithm \ref{alg_main} together with local refinement 
gives rather satisfactory results. 
The time used by Algorithm \ref{alg_main} includes the selection of cliques 
and computation of the exposing vectors, 
but excludes the postprocessing time, which is reported separately.
Table \ref{table2} shows some numerical results on larger instances.

\begin{table*}[!h]
	\centering
	\caption{Numerical results of robust facial reduction 
		on instances with 1000 vertices, 
		generated using the multiplicative noise
		model on a $[-0.5,0.5]^2$ grid. 
		Each row contains the average result over 10 instances 
		with fixed $n$, $nf$ and $R$,
		where $n$ is the number of sensors/vertices in $\GG$, 
		$R$ is the radio range;
		$nf$ is the noise factor. 
		The \emph{density} refers to the ratio 
		$\frac{\text{number of edges}}{0.5n(n-1)}$. \vspace{1em}
		\label{table1}}
	\begin{tabular}{| l | l | l | l | l | l | l | l |}
		\hline
		$n$ 
		& $nf$ 
		& $R$ 
		& density 
		&\footnotesize Time used by 
		&\footnotesize Time used for 
		&\footnotesize RMSD $\%R$ 
		&\footnotesize RMSD $\%R$ 
	\\  
		&  
		&  
		& 
		&\footnotesize Alg. \ref{alg_main} (s) 
		&\footnotesize refinement
		&\footnotesize initial 
		&\footnotesize refined 
        \\ \hline
		1000 & 0.0 & 0.25 & 15.8\% & 53.1 &  0.5 &  0.0\% &  0.0\% \\ \hline
		1000 & 0.1 & 0.25 & 15.7\% & 51.4 &  3.8 &  2.3\% &  0.6\% \\ \hline
		1000 & 0.2 & 0.25 & 15.7\% & 51.9 &  2.3 & 49.7\% &  2.0\% \\ \hline 
		1000 & 0.3 & 0.25 & 15.7\% & 67.1 &  6.5 & 76.3\% &  2.9\% \\ \hline
		1000 & 0.4 & 0.25 & 15.7\% & 64.8 &  7.0 & 72.8\% &  5.6\% \\ \hline \hline
		1000 & 0.1 & 0.15 &  6.2\% &  9.5 &  2.0 & 24.4\% &  1.1\% \\ \hline 
		1000 & 0.1 & 0.20 & 10.5\% & 20.3 &  1.5 &  4.2\% &  0.8\% \\ \hline
		1000 & 0.1 & 0.25 & 15.7\% & 51.4 &  3.8 &  2.3\% &  0.6\% \\ \hline
		1000 & 0.1 & 0.30 & 21.3\% &140.6 &  1.1 &  1.6\% &  0.5\% \\ \hline
		1000 & 0.1 & 0.35 & 27.8\% &240.6 &  1.3 &  1.2\% &  0.5\% \\ \hline 
	\end{tabular}
\end{table*}

\begin{table*}[h!]
	\centering
	\caption{Numerical results of robust facial reduction 
		on instances with more than 1000 vertices,
		generated using the multiplicative noise
		model on a $[-0.5,0.5]^2$ grid. 
		Each row contains the average result over 5 instances 
		with fixed $n$, $nf$ and $R$,
		where $n$ is the number of sensors/vertices in $\GG$, 
		$R$ is the radio range;
		$nf$ is the noise factor. 
		The \emph{density} refers to the ratio 
		$\frac{\text{number of edges}}{0.5n(n-1)}$.\vspace{1em} 
		\label{table2}}
	\begin{tabular}{| l | l | l | l | l | l | l | l |}
		\hline
		$n$ 
		& $nf$ 
		& $R$ 
		& density 
		&\footnotesize Time used 
		&\footnotesize Time used for 
		&\footnotesize RMSD $\%R$ 
		&\footnotesize RMSD $\%R$ 
	\\  
		&  
		&  
		& 
		&\footnotesize by Alg. \ref{alg_main} (s) 
		&\footnotesize refinement
		&\footnotesize initial 
		&\footnotesize refined 
        \\ \hline
		2000 & 0.1 & 0.20 & 10.6\% & 223.5 &  3.1 &  2.2\% &  0.6\% \\ \hline
		2000 & 0.2 & 0.20 & 10.5\% & 220.2 &  7.5 & 69.5\% &  2.0\% \\ \hline
		2000 & 0.3 & 0.20 & 10.5\% & 222.3 &  7.0 & 81.1\% &  3.1\% \\ \hline 
		2000 & 0.4 & 0.20 & 10.6\% & 230.2 &  6.8 & 85.1\% &  5.3\% \\ \hline \hline
		3000 & 0.1 & 0.20 & 10.5\% &1011.6 & 11.7 &  2.4\% &  0.5\% \\ \hline 
		3000 & 0.2 & 0.20 & 10.4\% & 986.5 & 23.0 & 64.3\% &  1.3\% \\ \hline 
		3000 & 0.3 & 0.20 & 10.5\% &1063.9 & 17.8 & 67.5\% &  3.0\% \\ \hline
		3000 & 0.4 & 0.20 & 10.6\% &1016.4 & 18.9 & 74.8\% &  5.0\% \\ \hline \hline
		4000 & 0.1 & 0.20 & 10.5\% &3184.0 & 13.7 &  1.8\% &  0.4\% \\ \hline 
		4000 & 0.2 & 0.20 & 10.5\% &3129.9 & 22.5 & 62.8\% &  1.3\% \\ \hline 
		4000 & 0.3 & 0.20 & 10.5\% &3226.1 & 27.3 & 79.8\% &  2.8\% \\ \hline 
		4000 & 0.4 & 0.20 & 10.6\% &3220.1 & 24.1 & 71.1\% &  4.9\% \\ \hline \hline
		4000 & 0.2 & 0.175&  8.3\% &1618.1 & 30.9 & 56.7\% &  1.5\% \\ \hline 
		4000 & 0.3 & 0.175&  8.3\% &1554.1 & 43.2 & 88.4\% &  3.2\% \\ \hline 
		4000 & 0.4 & 0.175&  8.2\% &1535.8 & 30.5 & 86.1\% &  5.7\% \\ \hline \hline
		4000 & 0.2 & 0.15 &  6.2\% & 801.9 & 41.3 & 90.5\% &  1.7\% \\ \hline 
		4000 & 0.3 & 0.15 &  6.2\% & 783.0 & 36.2 &106.4\% &  4.0\% \\ \hline 
		4000 & 0.4 & 0.15 &  6.2\% & 759.0 & 30.7 &109.1\% &  6.8\% \\ \hline \hline
		4000 & 0.2 & 0.125&  4.4\% & 616.8 & 28.1 &110.3\% &  2.1\% \\ \hline 
		4000 & 0.3 & 0.125&  4.4\% & 541.2 & 29.8 &128.3\% &  4.5\% \\ \hline 
		4000 & 0.4 & 0.125&  4.4\% & 420.7 & 31.2 &128.6\% & 13.1\% \\ \hline \hline
		5000 & 0.2 & 0.125&  4.4\% & 905.0 & 59.5 &110.8\% &  2.0\% \\ \hline 
		6000 & 0.2 & 0.125&  4.4\% &1627.2 & 67.3 & 99.6\% &  2.6\% \\ \hline 
		7000 & 0.2 & 0.125&  4.4\% &2237.8 & 93.9 &100.3\% &  1.9\% \\ \hline
		8000 & 0.2 & 0.125&  4.4\% &3704.7 &120.4 & 92.6\% &  1.9\% \\ \hline 
		9000 & 0.2 & 0.125&  4.4\% &5883.7 & 87.6 & 97.9\% &  1.9\% \\ \hline
	\end{tabular}
\end{table*}

\newpage

\section{The Pareto frontier of the unfolding heuristic.}
\label{sect:Pareto}
The facial reduction algorithm presented in the previous section is effective when $G$ is fairly dense (so that many cliques are available) and the SDP relaxation of the EDM completion problem without noise is exact. In this section, we consider problems at the opposite end of the spectrum. We will suppose that $G$ is sparse and we will seek a low rank solution approximately solving  the SDP \eqref{eqn:SDP_conv}. To this end,  consider the problem:
\begin{align}
\text{maximize}~~~ &\tr X\notag\\
\text{subject to}~~ &\|\mathcal{P}\circ\KK(X)-d\|\leq \sigma \label{prob:main}\\
~ &Xe=0\notag\\
&X\succeq 0.\notag
\end{align}
Here, an estimate of the tolerance $\sigma>0$ on the misfit is typically available based on the physical source of the noise. 
Trace maximization encourages the solution $X$ to have a 
lower rank. 
This is in contrast to the usual min-trace strategy 
in compressed sensing; 
see \cite{var_prop,par,lsqr_con} for a discussion. Indeed, as was mentioned in the introduction in terms of the factorization $X=PP^T$, the equality $\tr(X)= \frac{1}{2n}\sum^n_{i,j=1} \|p_i-p_j\|^2$ holds, where $p_i$ are the rows of $P$. Thus trace maximization serves to ``flatten'' the realization of the graph. 
We focus on the max-trace regularizer, though an entirely analogous analysis holds for min-trace. At the end of the section we compare the two.


We propose a first-order method for this problem using a 
Pareto search strategy originating in portfolio optimization. 
This technique has recently garnered much attention in wider 
generality; e.g., \cite{par,lsqr_con,spgl1:2007}.  
The idea is simple: exchange the objective and the difficult 
constraint, and then use the easier flipped problem to solve 
the original. Thus we are led to consider the parametric optimization problem
\begin{align}
\varphi(\tau):=~~~~
\text{minimize}~~~~ &\|\mathcal{P}\circ\KK(X)-d\|\notag\\
\text{subject to}~~~~ &\tr X =\tau \label{eq:primal}\\
&Xe=0\notag\\
&X\succeq 0\notag.
\end{align}
See Figure \ref{fig:pareto_noise_compare} below for an illustration.

\begin{figure}[!ht] 
\centering 			
\begin{subfigure}[b]{0.48\textwidth} 
\includegraphics[width=\textwidth,trim= 35 1 30 1, clip=true]{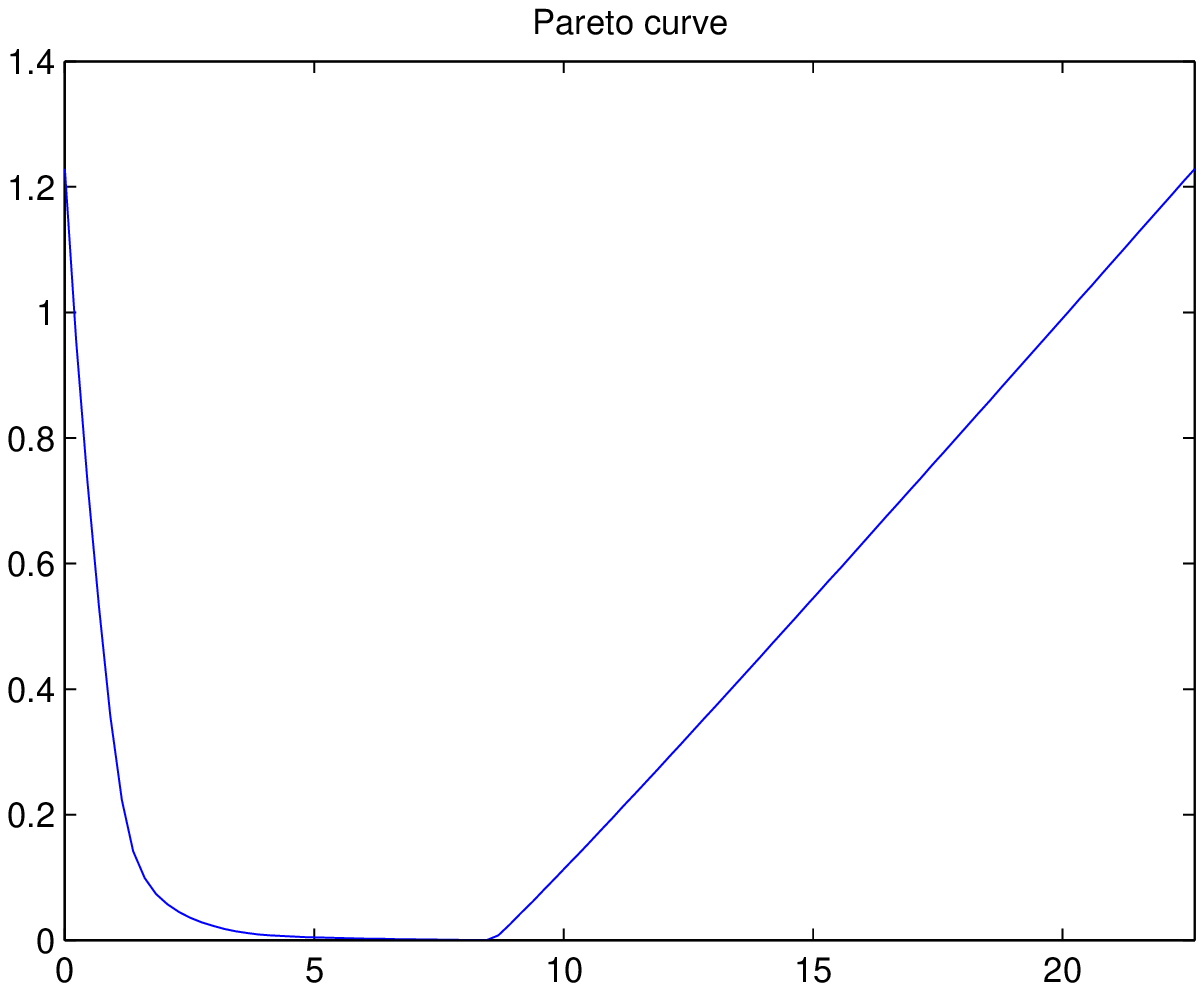} 
\caption{$\sigma=0$} 
\label{fig:pareto_nonoise} 
\end{subfigure} ~ 
\begin{subfigure}[b]{0.48\textwidth} 
\includegraphics[width=\textwidth,trim= 30 1 35 1, clip=true]{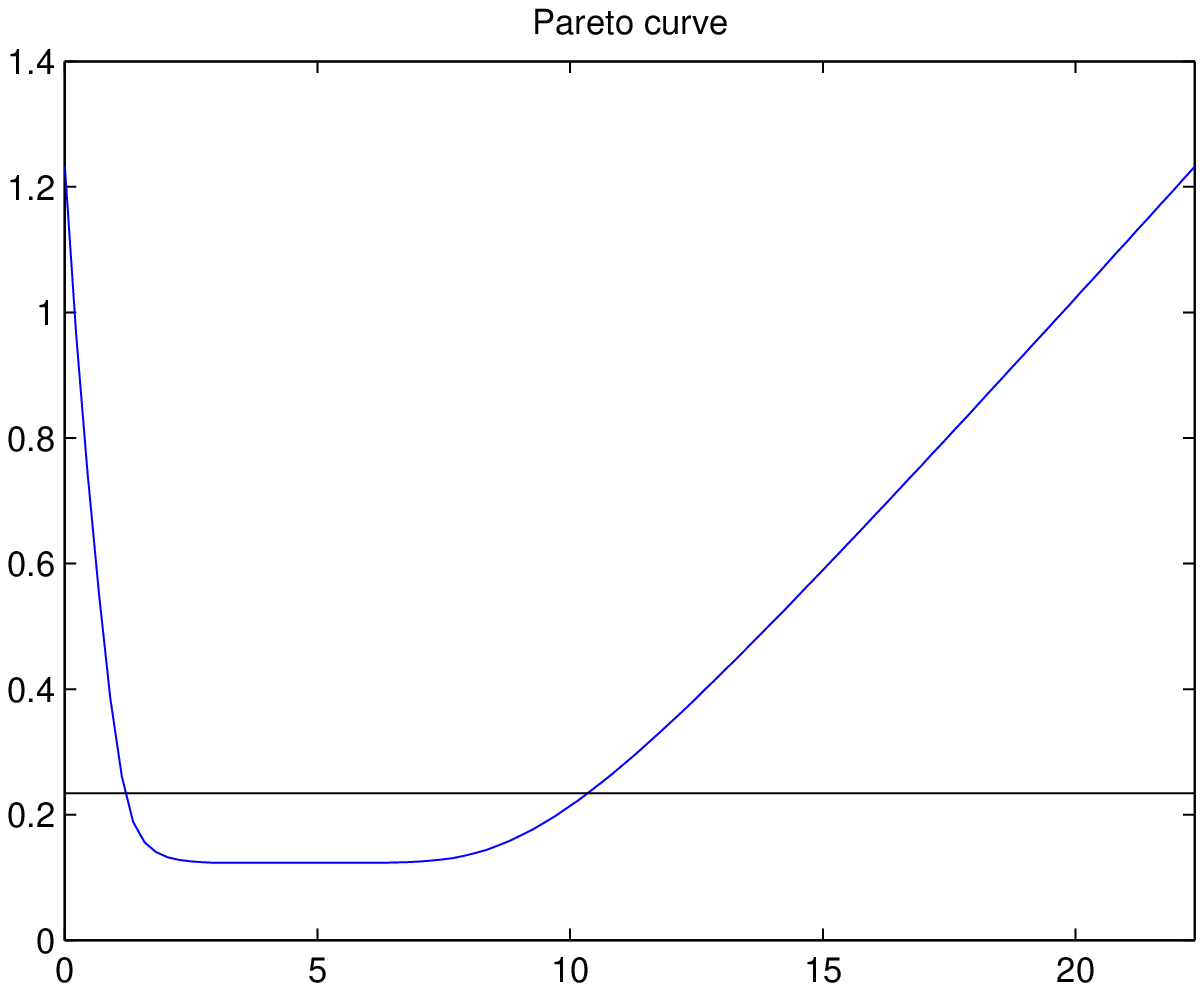} 
\caption{$\sigma=0.24$} 
\label{fig:pareto_noise} 
\end{subfigure}
\caption{Graph of $\varphi$ with noise tolerance $\sigma=0$ and $\sigma=0.24$}
\label{fig:pareto_noise_compare}
\end{figure}

Observe that the evaluation of $\varphi(\tau)$ is well adapted 
to first-order methods, since the feasible region is so simple.
It is well-known that $\varphi$ is a convex function, and therefore
to solve the original problem \eqref{prob:main}, 
we simply need to find the largest $\tau$ satisfying 
$\varphi(\tau)\leq \sigma$. We note that the smallest value of $\tau$ satisfying
$\varphi(\tau)\leq \sigma$ corresponds instead to minimizing the trace. We propose to evaluate $\varphi(\tau)$ by the Frank-Wolfe algorithm and then solve for the needed value of $\tau$ by an inexact Newton method. 
We will see that this leads to an {\em infeasible method} that is unaffected by the inherent ill-conditioning of the underlying EDM completion problem discussed in the previous sections.

\subsection{An inexact Newton method. }
We now describe an inexact Newton method for finding the largest value $\tau$ satisfying $\varphi(\tau) \leq \sigma $. To this end, we introduce the following definition.
\begin{definition}[Affine minorant oracle]
	{\rm
		Given a function $v : I \to \R$ on an interval $I\subset \R$, an \emph{affine minorant oracle} is a mapping $\mathcal{O}_v$ that assigns to each pair $(t, \alpha) \in I\times [1,\infty)$ real numbers $(l,u,s)$ such that $0 \leq l \leq v(t)\leq u$, $\frac{u}{l} \leq \alpha$, and the affine function $t' \mapsto l + s (t' - x)$ minorizes $v$.
	}
\end{definition}

For the EDM completion problem, the function $v$ is given by $v(\tau) = \varphi(\tau) -\sigma$. The inexact Newton method based on an affine minorant oracle is described in Algorithm~\ref{IN}.

\begin{algorithm}[h]
	\caption{Inexact Newton method\label{IN}} 
	\begin{algorithmic}
		\STATE {\bf Input:} Convex function $v\colon I\to\R$ on an interval $I\subset\R$ via an affine minorant oracle $\mathcal{O}_v$, target accuracy $\beta>0$, initial point $t_0\in I$ with $v(t_0) > 0$, and a constant $\alpha\in (1,2)$.
		\smallskip
		
		\STATE  $(l_0,u_0,s_0):=\mathcal{O}_v(t_0,\alpha)$;
		\STATE $k\gets 0$;
		\STATE $l_0\gets  0$;	
		\STATE $u_0\gets  +\infty$;			
		\WHILE{$\frac{u_{k}}{l_{k}} > \alpha$ and $u_k>\beta$}{
			\STATE $t_{k+1} \gets  t_k - \frac{ l_k }{s_k}$; 
			\STATE $(l_{k+1},s_{k+1}):=\mathcal{O}_v(t_{k+1},\alpha)$;
			\STATE $k\gets k+1$;
		} 
		\ENDWHILE 
		\STATE {\bf return} $t_k$;
	\end{algorithmic}
\end{algorithm}


It can be shown that the iterates $t_k$ generated by the inexact 
Newton method (Algorithm \ref{IN}), 
when applied to a convex function $v\colon I\to\R$ 
having a root on the interval $I$, converge to the root 
$\bar{t}$ of $v$ closest to $t_0$. 
Moreover, the convergence is linear in function value: 
the algorithm is guaranteed to terminate after at most
$$K\leq \max \left\{\log_{2/\alpha}\Big(\frac{|s_0|R}{\beta}\Big)+\log_{2/\alpha}(2)\cdot \log_{2/\alpha}\Big(\frac{2l_0}{\beta}\Big) ,1\right\}$$
iterations, where we set $R=\bar{t}-t_0$. For a proof and a discussion, 
see the preprint \cite{flippy}. 

Thus to implement this method, for the problem \eqref{prob:main}, we must describe an affine minorant oracle for $v(t)=\varphi(t)-\sigma$.
Then, after the number of iterations given above, we can obtain a centered PSD matrix $X$ satisfying 
$$\|\mathcal{P}\circ\KK(X)-d\|\leq \sigma +\beta\qquad \textrm{ and } \qquad \tr(X)\geq OPT,$$
where OPT denotes the optimal value of  \eqref{prob:main}.
A key observation is that the derivative of $v$ at the root 
{\em does not} appear in the iteration bound. 
This is important because for the function 
$v(t)=\varphi(t)-\sigma$, the inherent ill-conditioning of 
\eqref{prob:main} can lead to the derivative of $v$ at 
the root being close to zero.

\subsection{Solving the inner subproblem with Frank-Wolfe algorithm. }
In this subsection, we describe an affine minorant oracle for $\varphi(\tau)$ based on
the {\em Frank-Wolfe algorithm} \cite{FW-alg}, 
which has recently found many applications in 
machine learning (see, e.g., \cite{nem_cond,Jaggi}). Throughout, we fix a value $\tau$ satisfying $\varphi(\tau)> \sigma$. 
To apply the {\em Frank-Wolfe algorithm}, we must first square the objective in \eqref{prob:main} to make it smooth.
To simplify notation, define $$\mathcal{A}:=\PP\circ\KK,\quad f(X):=\frac{1}{2}\|\mathcal{A}(X)-d\|^2 \quad\textrm{ and } \quad \mathcal{D}:=\{X\succeq 0: \tr X=1, \,Xe=0\}.$$
Thus we seek a solution to 
\begin{equation*}
\min\, \{f(X): X\in \tau\mathcal{D}\}. 
\end{equation*}
The Frank-Wolfe scheme is described in Algorithm~\ref{FW}.

\begin{algorithm}[h]
	\caption{Affine minorant oracle based on the Frank-Wolfe algorithm\label{FW}} 
	\begin{algorithmic}
		\STATE {\bf Input:} $\tau\geq 0$, relative tolerance  $\alpha > 1$, and $\beta >0$.
		\STATE Let $k\leftarrow 0$, $l_0\leftarrow \frac12\sigma^2$, and $u_0\leftarrow  +\infty$. 
		Pick any point $X_0$ in $\mathcal{\tau D}$. 
		
		\WHILE{$\sqrt{2u_k}-\sigma>\alpha (\sqrt{2l_k}-\sigma)$ and $\sqrt{2u_k}-\sigma> \beta$}{
			\STATE Choose a direction 
			\begin{equation}
			S_k\in\argmin_{S\in \tau\mathcal{D}} 
			\langle \nabla f( X_k),S\rangle\label{eq:direc};
			\end{equation}
			\STATE Set the stepsize: $\gamma_k \in \argmin_{\gamma\in [0,1]}f(X_k+\gamma (S_k-X_k))$;
			\STATE Update the iterate: $X_{k+1}\leftarrow X_k +\gamma_k (S_k-X_k)$;
			\STATE Update the upper bound: $u_{k+1}\leftarrow  f(X_{k+1})$;
			\STATE Update the lower bound: 
			\[
			l_{k+1}\leftarrow \max\left\{l_{k},f(X_k)+\langle \nabla f(X_k), S_k-X_k\rangle \right\};
			\]
			\STATE \vspace{-0.8em} Increment the iterate:
			$k\leftarrow k+1$;
		} 
		\IF{$l_{k+1}>l_k$}{
			\STATE $y\leftarrow d - \PP\circ\KK(X_k)$;\\
			$X\leftarrow X_k$\\
			$S\leftarrow S_k$
		}
		\ENDIF
		
		\ENDWHILE
		\STATE $l\leftarrow \frac{l_k+\frac{1}{2}\|y\|^2_2}{\|y\|_2}-\sigma$;\\
		$u\leftarrow \sqrt{2u_k}-\sigma$;\\
		$s=\frac{1}{\tau\|y\|}\langle \nabla f(X),S\rangle$;
		
		\STATE {\bf return} $\left(l,u,s\right)$;
	\end{algorithmic}
\end{algorithm}



The computational burden of the method is the 
minimization problem \eqref{eq:direc}. To elaborate on this, observe first that
\begin{align*}
\nabla f(X)
=\ & \mathcal{K}^*\circ\mathcal{P}^*\big(\mathcal{P}\circ{\KK}(X)-d\big).
\end{align*}
Notice that the matrix $\mathcal{K}^*\circ\mathcal{P}^*
\big(\mathcal{P}\circ{\KK}(X)-d\big)$ has the same sparsity pattern, modulo the diagonal, 
as the adjacency matrix of the graph. 
As a result, when the graph $G$ is {\em sparse}, 
we claim that the linear optimization problem 
\eqref{eq:direc} is easy to solve. 
Indeed, observe $\nabla f(X)e=0$ and consequently an easy computation shows that
$\min_{S\in \tau\mathcal{D}} \,
\langle \nabla f( X),S\rangle$ equals $\tau$ times the minimal eigenvalue of the restriction of $\nabla f(X)$ to $e^{\perp}$; this minimum in turn is attained at the matrix $\tau vv^T$ where $v$ is the corresponding unit-length eigenvector. Thus to 
solve \eqref{eq:direc} we must find only the minimal eigenvalue-eigenvector pair of $\nabla f(X)$ on $e^{\perp}$, which can be done
fairly quickly by a Lanczos method, and in particular, 
by orders of magnitude faster than the full eigenvalue 
decomposition. 
Thus, the Frank-Wolfe method is perfectly 
adapted to our problem instance.
\smallskip

\begin{thm}[Affine minorant oracle]\label{thm:aff_or}
 Algorithm \ref{FW} is an affine minorant oracle for the function $v(\tau):=\varphi(\tau)-\sigma$.
\end{thm}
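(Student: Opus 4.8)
The plan is to verify, for the triple $(l,u,s)$ returned by Algorithm~\ref{FW} at the fixed value $\tau$ with $\varphi(\tau)>\sigma$, each of the three defining properties of an affine minorant oracle for $v(\tau)=\varphi(\tau)-\sigma$. The unifying observation is that squaring the objective turns $\varphi$ into the Frank--Wolfe value function: writing $f^*(\tau):=\min_{X\in\tau\mathcal{D}}f(X)$, we have $\varphi(\tau)=\sqrt{2f^*(\tau)}$. I would first record that the running scalars sandwich this quantity, $l_k\le f^*(\tau)\le u_k$. The upper bound is immediate from feasibility, $u_k=f(X_k)\ge f^*(\tau)$. The lower bound follows from convexity of $f$: the supporting affine function $Z\mapsto f(X_k)+\langle\nabla f(X_k),Z-X_k\rangle$ underestimates $f$, and its minimum over $\tau\mathcal{D}$, attained at the Frank--Wolfe atom $S_k$, underestimates $f^*(\tau)$; the initial value $l_0=\tfrac12\sigma^2\le\tfrac12\varphi(\tau)^2=f^*(\tau)$ is likewise valid precisely because $\varphi(\tau)>\sigma$. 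Since $t\mapsto\sqrt{2t}-\sigma$ is increasing, the returned $u=\sqrt{2u_k}-\sigma$ then satisfies $u\ge\sqrt{2f^*(\tau)}-\sigma=v(\tau)$.

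The core of the proof is the affine minorant, which I would extract from the saved residual $y=d-\mathcal{A}(X)$ (note $\|y\|^2=2f(X)\ge2f^*(\tau)=\varphi(\tau)^2>\sigma^2\ge0$, so $\|y\|>0$). From $f(X)=\tfrac12\|\mathcal{A}(X)-d\|^2$ we get $\nabla f(X)=\mathcal{A}^*(\mathcal{A}(X)-d)=-\mathcal{A}^*y$, and, as noted just before the theorem, $\langle\nabla f(X),S\rangle=\min_{S'\in\tau\mathcal{D}}\langle\nabla f(X),S'\rangle=\tau\lambda_{\min}$, where $\lambda_{\min}$ is the smallest eigenvalue of the restriction of $\nabla f(X)$ to $e^\perp$; thus the returned slope is $s=\lambda_{\min}/\|y\|$. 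A global lower bound on $\varphi$ now comes from Cauchy--Schwarz against the unit vector $-y/\|y\|$: for any $\tau'\ge0$ and any $X'\in\tau'\mathcal{D}$,
\[
\|\mathcal{A}(X')-d\|\ \ge\ \Big\langle -\tfrac{y}{\|y\|},\,\mathcal{A}(X')-d\Big\rangle=\frac{\langle y,d\rangle-\langle y,\mathcal{A}(X')\rangle}{\|y\|}.
\]
Since $\langle y,\mathcal{A}(X')\rangle=-\langle\nabla f(X),X'\rangle\le-\tau'\lambda_{\min}$ over $X'\in\tau'\mathcal{D}$, minimizing the left side gives $\varphi(\tau')\ge(\lambda_{\min}\tau'+\langle y,d\rangle)/\|y\|=s\tau'+\langle y,d\rangle/\|y\|$.

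It then remains to check that the returned scalar $l$ places this line correctly, i.e.\ that $l+\sigma=s\tau+\langle y,d\rangle/\|y\|$, so that $\tau'\mapsto l+s(\tau'-\tau)$ equals $s\tau'+\langle y,d\rangle/\|y\|-\sigma$ and hence minorizes $\varphi(\tau')-\sigma=v(\tau')$. I would verify this by substituting the saved identity $l_k=f(X)+\langle\nabla f(X),S-X\rangle$ into $l=(l_k+\tfrac12\|y\|^2)/\|y\|-\sigma$. Using $f(X)=\tfrac12\|y\|^2$ and the elementary identity $\langle\nabla f(X),X\rangle=\|y\|^2-\langle y,d\rangle$, the numerator collapses to $\tau\lambda_{\min}+\langle y,d\rangle$, giving exactly $l+\sigma=s\tau+\langle y,d\rangle/\|y\|$. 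Evaluating the minorant at $\tau'=\tau$ also yields $l\le v(\tau)$.

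Finally I would dispatch the normalization $l\ge0$ and the relative-accuracy bound $u/l\le\alpha$. For the former, the key is that the algorithm uses the refined bound rather than $\sqrt{2l_k}-\sigma$: by the arithmetic--geometric mean inequality, $l+\sigma=l_k/\|y\|+\|y\|/2\ge\sqrt{2l_k}\ge\sqrt{2l_0}=\sigma$, so $l\ge0$; the same computation shows $l\ge\sqrt{2l_k}-\sigma$. For the accuracy bound, the while-loop exits either because $\sqrt{2u_k}-\sigma\le\alpha(\sqrt{2l_k}-\sigma)$, in which case $u=\sqrt{2u_k}-\sigma\le\alpha(\sqrt{2l_k}-\sigma)\le\alpha l$ (and $l>0$, since $l=0$ would force $u\le0<v(\tau)$), or because $\sqrt{2u_k}-\sigma\le\beta$, in which case the returned upper bound already meets the target accuracy; this matches the twofold stopping rule of Algorithm~\ref{IN}. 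Well-definedness of the oracle follows from the standard convergence of Frank--Wolfe, which drives $u_k-l_k\to0$ and hence forces the loop to terminate. The step I expect to be the main obstacle is the middle one: getting the sign conventions right (residual $y=d-\mathcal{A}(X)$ versus $\mathcal{A}(X)-d$) while simultaneously showing that the clean algebraic collapse of $l$ produces a line that is globally below $\varphi$, not merely tangent at $\tau$.
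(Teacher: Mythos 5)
Your proof is correct and follows essentially the same route as the paper's: the affine minorant comes from the dual certificate $z=y/\|y\|$ (your Cauchy--Schwarz step is precisely weak duality for the norm-minimization dual), your algebraic collapse $l_k+\tfrac{1}{2}\|y\|^2=\tau\lambda_{\min}+\langle y,d\rangle$ is the paper's identity \eqref{eqn:id}, and your AM--GM bound $l\geq\sqrt{2l_k}-\sigma$ is the paper's completed square in disguise. You are in fact slightly more careful than the paper in verifying $l\geq 0$, $u\geq v(\tau)$, and the $\beta$-termination branch of the stopping rule.
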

\begin{proof}
We first claim that upon termination of Algorithm \ref{FW}, the line 
$t'\mapsto l+s(\tau-\tau')$ is a lower minorant of $v(\tau')-\sigma$. To see this,
observe that the dual of the problem 
$$\varphi(\tau)=\min_{X\in\tau \mathcal{D}}~ \|\mathcal{A}(X)-d\|$$
is given by 
$$\max_{z\in\R^E:\, \|z\|\leq 1}~ h_{\tau}(z):=\langle d,z\rangle -\tau \delta^*_{\mathcal{D}}(\mathcal{A}^*z),$$
where $\delta^*_{\mathcal{D}}$ denotes the support function of $\mathcal{D}$.
Then by weak duality for any vector $z$ with $\|z\|_2\leq 1$ and any $\tau'$, we have the inequality
\begin{equation}\label{eqn:minorant}
\varphi(\tau')\geq h_{\tau'}(z)=\langle d,z\rangle -\tau' \delta^*_{\mathcal{D}}(\mathcal{A}^*z)=h_{\tau}(z)-(\tau'-\tau)\delta^*_{\mathcal{D}}(\mathcal{A}^*z).
\end{equation}
Hence the affine function $\tau'\mapsto h_{\tau}(z)-(\tau'-\tau)\delta^*_{\mathcal{D}}(\mathcal{A}^*z)$ minorizes the value function $\varphi(\tau')$.
Now a quick computation shows that upon termination of Algorithm~\ref{FW}, we have
\begin{equation}\label{eqn:id}
l_k+\frac{1}{2}\|y\|^2=h_{\tau}(y). 
\end{equation}
Setting $z=\frac{y}{\|y\|_2}$ in inequality \eqref{eqn:minorant} and using the identity \eqref{eqn:id}, we obtain for all $\tau'\in\R$ the inequality 
\begin{align*} 
\varphi(\tau')&\geq \frac{l_k+\frac{1}{2}\|y\|^2}{\|y\|}-(\tau'-\tau)\frac{\delta^*_{\mathcal{D}}(\mathcal{A}^*y)}{\|y\|}\\
&=l+\sigma+s(\tau'-\tau).
\end{align*}
Hence the line 
$t'\mapsto l+s(\tau-\tau')$ is a lower minorant of $v(\tau')-\sigma$, as claimed.
Next, we show that upon termination, the inequality $\frac{u}{l}\leq \alpha$ holds. To see this, observe that
\[
\begin{aligned}
\frac{u}{l} &= \frac{2\|y\|u}{2l_k+\|y\|^2-2\sigma\|y\|}
\leq\frac{2\|y\| u}{\left(\frac{u + \alpha\sigma}{\alpha}\right)^2 + \|y\|^2 - 2\sigma\|y\|} \\
&=
\alpha\left(
\frac{2\|\alpha y\| u}{\left(u + \alpha\sigma\right)^2 + \|\alpha y\|^2 - 2\alpha\sigma\|\alpha y\|}
\right).
\end{aligned}
\] 
Now, observe that the numerator of the rightmost expression is always less than the denominator: 
\[
\begin{aligned}
\Big(\left(u + \alpha\sigma\right)^2 + \|\alpha y\|^2 - 2\alpha\sigma\|\alpha y\|\Big) - 2\|\alpha y\| u  = \left(u + \alpha\sigma\right)^2 + &\|\alpha y\|^2 - 2\|\alpha y\|(u + \alpha\sigma) \\
& = \left(u + \alpha\sigma - \|\alpha y\|\right)^2  \geq 0.
\end{aligned}
\]
We conclude that $\frac{u}{l}\leq \alpha$, as claimed. This completes the proof.
\end{proof}
\smallskip

Thus Algorithm~\ref{FW} is an affine minorant oracle for $\varphi-\sigma$, and  
linear convergence guarantees of the inexact Newton method (Algorithm~\ref{IN}) apply.

Finally let us examine the iteration complexity of the Frank-Wolfe algorithm itself. Suppose that that for some iterate $k$, we have $\frac{\sqrt{2u_k}-\sigma}{\sqrt{2l_k}-\sigma}>\alpha$ and $\sqrt{2u_k}-\sigma > \beta$. Dropping the subscripts $k$ for clarity, observe that $\frac{\sqrt{2u}-\sqrt{2l}}{\beta}> \frac{(\sqrt{2u}-\sigma)-(\sqrt{2l}-\sigma)}{\sqrt{2u}-\sigma}> 1-\frac{1}{\alpha}$. Consequently in terms of the duality gap $\epsilon:=u-l$, we have
$$2\epsilon\geq (\sqrt{2u}-\sqrt{2l})^2 > \beta^2\left(1-\frac{1}{\alpha}\right)^2.$$ Hence Algorithm \ref{FW} terminates provided $\epsilon\leq \frac{1}{2}\beta^2\left(1-\frac{1}{\alpha}\right)^2$. Standard convergence guarantees of the Frank-Wolfe method (e.g., \cite{FW-alg,DR,Jaggi}), therefore imply that the method terminates after $\mathcal{O}\left({\frac{\tau_k L^2}{\beta^2}}\right)$ iterations, where $L$ is the Lipschitz constant of the gradient $\nabla f$.

Summarizing, consider an instance of the problem \eqref{prob:main} with optimal value $OPT$. Then given a target accuracy $\beta>0$ on the misfit $\|\PP\circ\KK(\cdot)-d\|$, we can find a matrix $X\succeq 0$ with $Xe=0$ that is super-optimal and nearly feasible, meaning
$$\tr(X)\geq \textrm{OPT}\qquad\textrm{ and }\qquad\|\PP\circ\KK(X)-d\|\leq \sigma+\beta$$
using at most $\max \left\{\log_{2/\alpha}\Big(\frac{|s_0|R}{\beta}\Big)+\log_{2/\alpha}(2)\cdot \log_{2/\alpha}\Big(\frac{2l_0}{\beta}\Big) ,1\right\}$ inexact Newton iterations\footnote{As before $|s_0|$ is the slope of the value function $v$ at $\tau_0$ and $R=\tau_0-\textrm{OPT}$.}, with each inner Frank-Wolfe algorithm terminating in at most $\mathcal{O}\left({\frac{\tau_0 L^2}{\beta^2}}\right)$ many iterations. 
Finally, we mention that in the implementation of the method, it is essential to 
warm start the Frank-Wolfe algorithm using iterates from previous Newton iterations.

\subsection{Comparison of minimal and maximal trace problems.}
It is interesting to compare the properties of the minimal trace solution 
\begin{align*}
\text{minimize}~~~ &\tr X\notag\\
\text{subject to}~~ &\|\mathcal{P}\circ\KK(X)-d\|\leq \sigma, \quad Xe=0,\quad X\succeq 0,\notag
\end{align*}
and the maximal trace solution
\begin{align*}
\text{maximize}~~~ &\tr X\notag\\
\text{subject to}~~ &\|\mathcal{P}\circ\KK(X)-d\|\leq \sigma, \quad Xe=0, \quad X\succeq 0\notag.
\end{align*}
In this section, we illustrate the difference using the proposed algorithm. Consider the following EDM completion problem coming from wireless sensor networks (Figure~\ref{fig:sensor}). The iterates generated by the inexact Newton method are plotted in Figure~\ref{fig:newtgo}.
\begin{figure}[!ht]
	\centering
	\caption{An instance of the sensor network localization problem on $n=50$ nodes with radio range $R=0.35$ and noise factor $nf=0.1$.\label{fig:sensor}}
	\includegraphics[scale=0.75,trim= 1 15 1 15, clip=true]{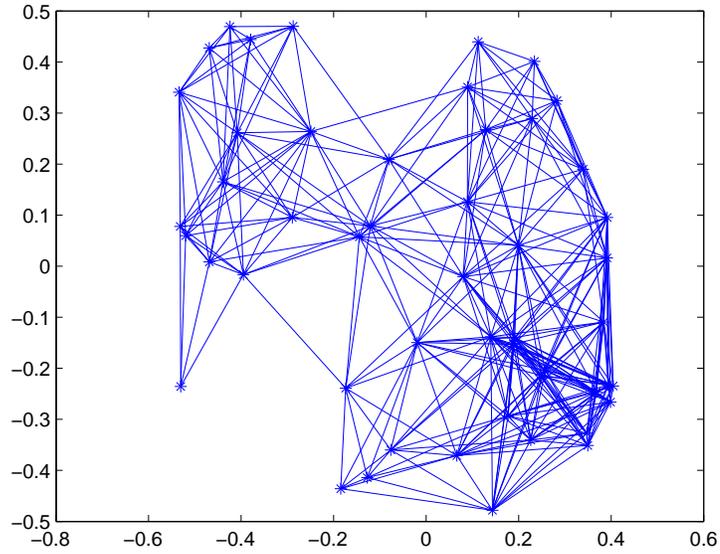}
\end{figure}

\begin{figure}[!ht]
	\centering
	\caption{Graph of $\varphi$ and inexact Newton iterates for solving the minimal trace and the maximal trace problems. Here $\sigma=0.2341$ (the dark horizontal line) and the tolerance on the misfit in the $l_2$-norm (the dashed horizontal line) is $\sigma+\beta=0.3341$.\label{fig:newtgo}}
	\includegraphics[scale=0.8,trim= 1 20 1 1, clip=true]{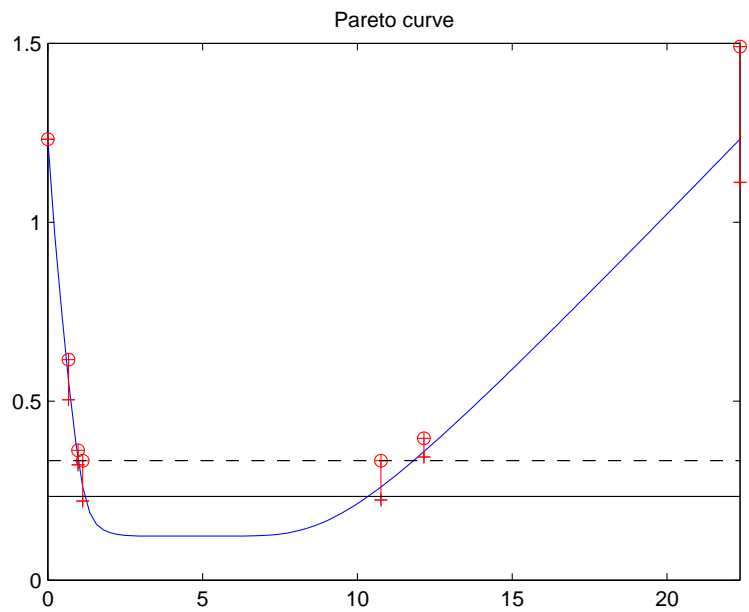}
\end{figure}

Let us consider first the maximal trace solution $X$. In Figure~\ref{fig:max_trace}, the asterisks $\textcolor{blue}{*}$ indicate the true locations of points in both pictures. In the picture on the left, the pluses $\textcolor{red}{+}$ indicate the points corresponding to the {\em maximal trace} solution $X$ after projecting $X$ onto rank $2$ PSD matrices, while in the picture on the right they denote the locations of these points after local refinement. The edges indicate the deviations.

\begin{figure}[h!]
	\centering
	\caption{Maximal trace solution.}	
	\includegraphics[scale=1.05,trim= 35 80 35 65, clip=true]{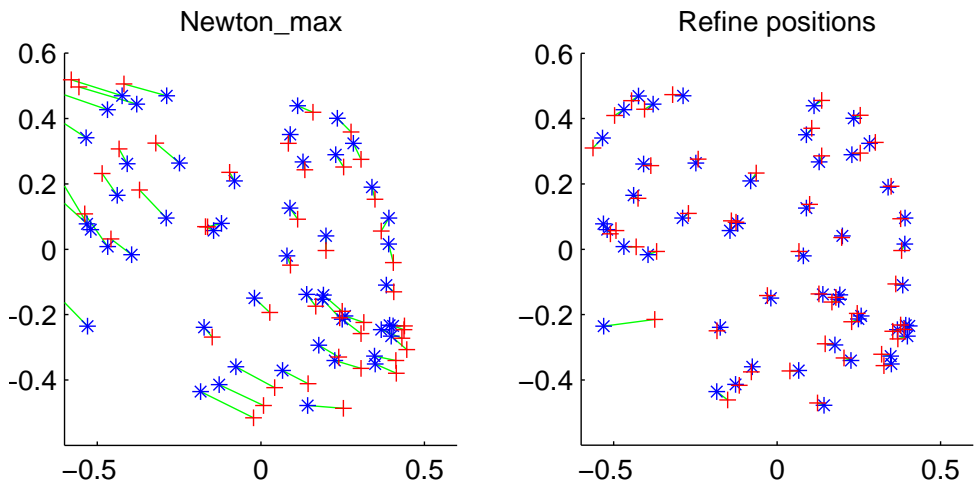}
	\label{fig:max_trace}
\end{figure}

In contrast, we now examine the minimal trace solution, Figure~\ref{fig:min_trace}. Notice that even after a local refinement stage, the realization is very far from the true realization that we seek, an indication that a local search algorithm has converged to an extraneous critical point of the least squares objective. We have found this type of behavior to be very typical in our numerical experiments.

\begin{figure}[h!]
	\centering
	\caption{Minimal trace solution.}
	\includegraphics[scale=1,trim= 35 80 35 65, clip=true]{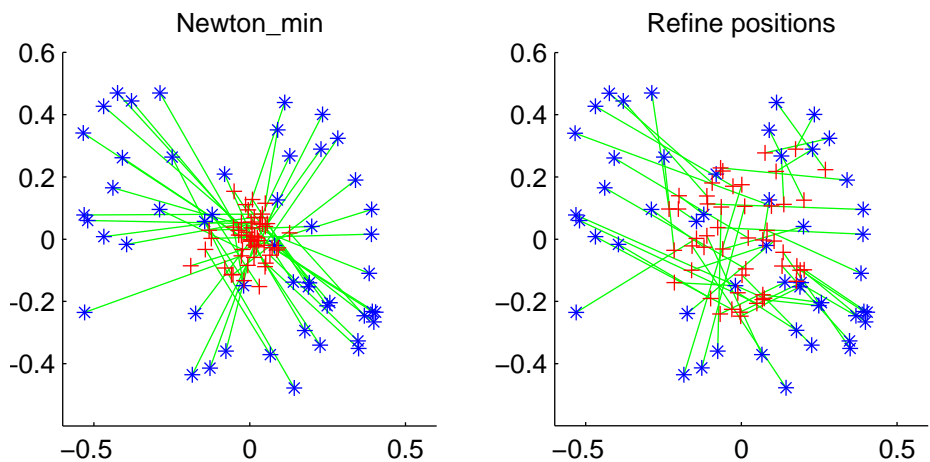}
	\label{fig:min_trace}
\end{figure}

Finally we mention an interesting difference between the maximal trace and the minimal trace solutions as far the as the value function $\varphi$ is concerned. When $\sigma=0$, the typical picture of the graph of $\varphi$ is illustrated in Figure~\ref{fig:par_fin}.
\begin{figure}[h!] 
	\centering 		 
	\includegraphics[scale=0.7, trim= 1 20 1 1, clip=true]{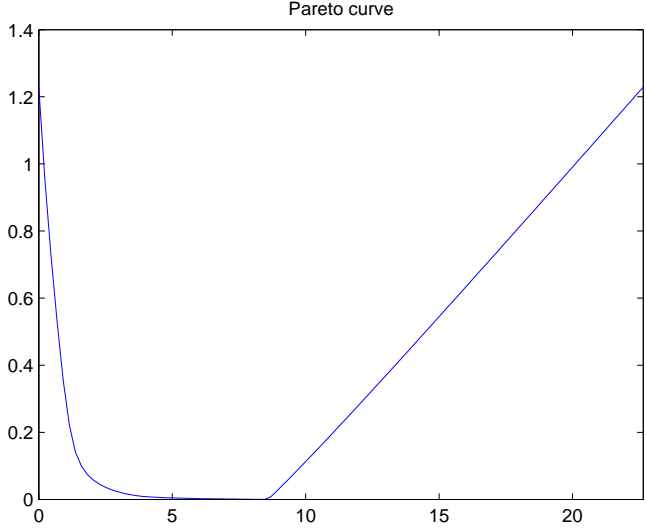} 
		\caption{Graph of $\varphi$ with $\sigma=0$.} 
		\label{fig:par_fin}
\end{figure}
The different shapes of the curve on the left and on the right sides are striking. 
To elucidate this phenomenon, consider the primal problem
\begin{align*}
\text{minimize}~~~ &\tr X\notag\\
\text{subject to}~~ &\mathcal{P}\circ\KK(X)=d, \quad Xe=0,\quad X\succeq 0,\notag
\end{align*}
and its dual
\begin{align*}
\text{maximize}~~~ &y^Td\notag\\
\text{subject to}~~ &\KK^*\circ\PP^*(y)+\beta ee^T\preceq I, \quad \notag
\end{align*}
In particular, the dual is strictly feasible and hence there is no duality gap. On the other hand, suppose that the dual optimal value is attained by some pair $(y,\beta)$ and suppose without loss of generality that $\KK^*\circ\PP^*(y)$ has an eigenvalue equal to one corresponding to an eigenvector orthogonal to $e$.
Then letting $\tau$ be the optimal value (the minimal trace), and appealing to equation~\eqref{eqn:minorant} we deduce 
\begin{align*}
\varphi(\tau')&\geq \frac{1}{\|y\|}\left(d^Ty -\tau\delta^*_{\mathcal{D}}(\mathcal\KK^*\circ\PP^*(y)) -(\tau'-\tau)\delta^*_{\mathcal{D}}(\mathcal\KK^*\circ\PP^*(y))\right)\\
 &\geq -\frac{(\tau'-\tau)}{\|y\|} \qquad \textrm{ for all }\tau'.
\end{align*}
Hence the fact that slope $\varphi'(\tau)$ is close to zero in Figure~\ref{fig:par_fin} indicates that the dual problem is either unattained (not surprising since the primal fails the Slater condition) or that the dual is attained only by vectors $y$ of very large magnitude. The reason why such phenomenon does not occur for the max-trace problem is an intriguing subject for further investigation.

\subsection{Numerical illustration.}
In this section, we illustrate the proposed method on sensor network localization instances. The data was generated in the same manner as the numerical experiments in Section~\ref{sect:numerics}. The following tables illustrate the outcome of the method by varying the noise factor ($nf$), the radio range ($R$), and the number of sensors ($n$). Throughout we have fixed the tolerance on the misfit $\|\PP\circ\KK(X)-d\|\leq \sigma+0.1$. We report the density of the graph, the CPU time that our algorithms runs, the number of the Frank-Wolfe iterations (FW\#), the RMSD of the resulting configuration, the RMSD of the configuration after local refinement, and the CPU time that the local refinement algorithm takes. The tests were run on MATLAB version R2011b, on a Linux machine with an Intel(R) Xeon(R) CPU E3-1225 @ 3.10GHz and 12 GB RAM.

\begin{table*}[h!]
\centering
\caption{Numerical results for the Pareto search strategy}
\begin{tabular}{ | r | r | r | r | r | r | r | r | r |}
\hline
\multicolumn{1}{|c|}{$n$} & 
\multicolumn{1}{c|}{$nf$} & 
\multicolumn{1}{c|}{$R$} & 
\multicolumn{1}{c|}{density} & 
\multicolumn{1}{c|}{CPU} & 
\multicolumn{1}{c|}{FW\#} & 
\multicolumn{1}{c|}{\footnotesize RMSD \%R} & 
\multicolumn{1}{c|}{\footnotesize RMSD \%R} & 
\multicolumn{1}{c|}{\footnotesize Refine} \\
\multicolumn{1}{|c|}{} & 
\multicolumn{1}{c|}{} & 
\multicolumn{1}{c|}{} & 
\multicolumn{1}{c|}{} & 
\multicolumn{1}{c|}{time (s)} & 
\multicolumn{1}{c|}{} & 
\multicolumn{1}{c|}{\footnotesize initial} & 
\multicolumn{1}{c|}{\footnotesize refined} & 
\multicolumn{1}{c|}{\footnotesize time (s)} \\\hline
 1000 &  0.0 &   0.10 &    2.9\% &      9.2 &  181 &   36.3\% &    0.1\% &    1.2 \\ \hline
 1000 &  0.1 &   0.10 &    2.9\% &      8.8 &  147 &   59.6\% &    3.6\% &    1.2 \\ \hline
 1000 &  0.2 &   0.10 &    2.9\% &      7.3 &  136 &   89.6\% &    7.5\% &    1.2 \\ \hline
 1000 &  0.3 &   0.10 &    2.9\% &      7.9 &  140 &  115.1\% &   11.8\% &    1.2 \\ \hline
\hline
 1000 &  0.1 &   0.10 &    2.9\% &      8.9 &  147 &   59.6\% &    3.6\% &    1.2 \\ \hline
 1000 &  0.1 &   0.15 &    6.3\% &      6.6 &  176 &   22.6\% &    2.1\% &    1.1 \\ \hline
 1000 &  0.1 &   0.20 &   10.7\% &     12.4 &  356 &   11.5\% &    1.4\% &    1.3 \\ \hline
 1000 &  0.1 &   0.25 &   15.9\% &     20.3 &  586 &    7.3\% &    1.2\% &    1.6 \\ \hline
 1000 &  0.1 &   0.30 &   22.0\% &     45.0 & 1074 &    4.9\% &    0.9\% &    1.4 \\ \hline
\hline
 1000 &  0.2 &   0.10 &    2.9\% &      7.3 &  136 &   89.6\% &    7.5\% &    1.2 \\ \hline
 2000 &  0.2 &   0.10 &    2.9\% &     17.1 &  169 &   66.3\% &    4.7\% &    5.0 \\ \hline
 3000 &  0.2 &   0.10 &    2.9\% &     30.8 &  189 &   56.4\% &    3.5\% &    5.0 \\ \hline
 4000 &  0.2 &   0.08 &    1.9\% &     63.8 &  227 &   80.6\% &    3.7\% &   11.6 \\ \hline
 5000 &  0.2 &   0.08 &    1.9\% &     75.1 &  179 &   74.0\% &    3.3\% &   16.9 \\ \hline
 6000 &  0.2 &   0.08 &    1.9\% &    179.6 &  264 &   68.3\% &    3.0\% &   26.9 \\ \hline
 7000 &  0.2 &   0.06 &    1.1\% &    253.7 &  345 &  119.1\% &    4.2\% &   28.8 \\ \hline
 8000 &  0.2 &   0.06 &    1.1\% &    355.4 &  370 &  112.0\% &    3.5\% &   25.8 \\ \hline
 9000 &  0.2 &   0.06 &    1.1\% &    425.8 &  338 &  108.0\% &    3.4\% &   42.4 \\ \hline
10000 &  0.2 &   0.06 &    1.1\% &    611.9 &  408 &  101.9\% &    3.1\% &   55.1 \\ \hline
11000 &  0.2 &   0.05 &    0.8\% &    744.9 &  435 &  149.3\% &    3.8\% &   39.5 \\ \hline
12000 &  0.2 &   0.05 &    0.8\% &    981.4 &  498 &  143.1\% &    3.9\% &   36.1 \\ \hline
13000 &  0.2 &   0.05 &    0.8\% &   1240.6 &  526 &  138.4\% &    4.5\% &   67.3 \\ \hline
14000 &  0.2 &   0.05 &    0.8\% &   1219.4 &  468 &  131.8\% &    6.7\% &   80.4 \\ \hline
15000 &  0.2 &   0.05 &    0.8\% &   1518.8 &  490 &  131.0\% &    5.1\% &   89.2 \\ \hline
\end{tabular}
\end{table*}

\newpage

\section{Conclusion and work in progress.}

In this paper, we described two algorithms (robust facial reduction and a search along the 
Pareto frontier) to solve the EDM completion problem with possibly inaccurate distance measurements, which has important applications and is numerically challenging.
The two algorithms are intended for EDM completion problems of different densities:
the Pareto frontier algorithm discussed in Section \ref{sect:Pareto}
is designed for sparse graphs whereas 
the robust facial reduction outlined 
in Algorithm \ref{alg_main} in Section \ref{sect:rfr}
tends to work better for denser graphs.
Though not studied in this work, 
it is possible to develop a distributed implementation of 
the robust facial reduction technique in order to solve even larger scale completion problems. 
The Pareto frontier estimation technique is promising for handling large scale EDM completion problems, since first-order methods become immediately applicable and sparsity of the underlying graph can be exploited when searching for a maximum eigenvalue-eigenvector pair via a Lanczos procedure. Numerical experiments have illustrated the effectiveness of both strategies.

\appendix

\section{Nearest-point mapping to $\Ss^{k,r}_{c,+}$}
\label{sect:nearest_point}

We now describe how to evaluate the nearest-point-mapping 
to the set $\Ss^{k,r}_{c,+}$---an easy and standard 
operation due to the Eckart-Young Theorem. 
To describe this operation, consider any matrix 
$X\in\mathcal{S}^n$ and a set 
$\mathcal{Q}\subset\mathcal{S}^n$. 
Define the {\em distance function} and the {\em projection}, 
respectively:
\index{distance function, $\dist(\cdot)$}
\index{$\dist(\cdot)$, distance function}
\index{projection function, $\proj(\cdot)$}
\index{$\proj(\cdot)$, projection function} 
$$\dist(X;\mathcal{Q})=\inf_{Y\in \mathcal{Q}} \|X-Y\|_F,$$
$$\proj(X;\mathcal{Q})=\{Y\in \mathcal{Q}: \|X-Y\|_F=\dist(X; \mathcal{Q})\}.$$
In this notation, we would like to find a matrix $Y$ in the set $\proj(X;\Ss^{k,r}_{c,+})$.
To this end, let 
$\begin{bmatrix}
\frac{1}{\sqrt k}e & U  \cr
\end{bmatrix}$
be any 
$k\times k$ orthogonal matrix.
First dealing with the centering constraint, one can verify $$\proj \big(X; \mathcal{S}^{k,r}_{c,+}\big)= U\Big[\proj\big(U^TXU; \mathcal{S}^{k-1,r}_+\big)\Big]U^T.$$
On the other hand, we have 
\begin{align*}
\proj \big(Z; &\mathcal{S}^{k-1,r}_+\big)=
W\Diag\Big(0,\ldots,0,\lambda^+_{k-r}(Z),\ldots,\lambda^+_{k-1}(Z)\Big)W^T,
\end{align*}
where $\lambda_1(Z)\leq \ldots\leq\lambda_{k-1}(Z)$ are the eigenvalues of $Z$ and the subscript $\lambda^+_i(Z)$ refers to their positive part, and $W$
is any orthogonal matrix in the eigenvalue decomposition $Z=W\Diag(\lambda(Z))W^T$. Thus computing a matrix in $\proj(X;\Ss^{k,r}_{c,+})$ requires no more than an eigenvalue decomposition.

\section{Solving the small least squares problem}
\label{sect:lss}
We now describe how to easily solve the least squares 
system \eqref{eqn:small_sys}. 
Typically, the matrix $Y$ will have rank $n-r$. 
Then the face $\Ss^{n}_{c,+}\cap Y^{\perp}$ can be written as 
$\Ss^{n}_{c,+}\cap Y^{\perp}=U\mathcal{S}^r_+U^T$, 
where the $n\times r$ matrix $U$ has as columns an orthonormal 
basis for the kernel of $Y$.
Consequently we are interested in solving an optimization problem of the form
\begin{align*}
\min_Z~ &\|\mathcal{A}(Z)-d\|^2_2\\
\textrm{ s.t.}~~&Z \in \mathcal{S}^r_+, 
\end{align*}
where the linear operator $\mathcal{A}\colon\mathcal{S}^n\to\R^{\EE}$ 
is defined by $[\mathcal{A}(Z)]_{ij}=[\KK(UZU^T)]_{ij}$ 
for all $ij\in \EE$. 
Let $\svec(Z)$ be the vectorization of $Z$ 
and let $A$ be a $|\EE|\times \frac{r(r+1)}{2}$ 
matrix representation of the operator $\mathcal{A}$.
Thus we are interested in solving the system
\begin{align}\label{eqn:small_system}
\min_Z~ &\|A(\svec Z)-d\|^2_2\\
\textrm{ s.t.}~~&Z \in \mathcal{S}^r_+, \notag
\end{align}
where $A$ is a tall-skinny matrix. 
One approach now is simply to expand the objective 
\begin{align*}
\|A(\svec Z)-d\|^2_2=\langle (A^TA)(&\svec Z),\svec Z\rangle 
-2\langle A^Td,\svec Z\rangle+\|d\|^2,
\end{align*}
and then apply any standard iterative method to solve the problem 
\eqref{eqn:small_system}. 
Alternatively, one may first form an economic QR factorization 
$A=QR$ (where $Q\in\R^{|\EE|\times \frac{1}{2}r(r+1)}$
has orthonormal columns and $R\in\R^{ \frac{1}{2}r(r+1)\times
	\frac{1}{2}r(r+1)}$ is upper triangular)
and then write the objective as 
$\|A(\svec Z)-d\|^2_2=\|R(\svec Z)- Q^Td\|^2$.
We can then pose the problem \eqref{eqn:small_system} as a 
small linear optimization problem over the product of the 
semidefinite cone $\mathcal{S}^r_+$ and 
a small second-order cone of dimension 
$\R^{\frac{r(r+1)}{2}}$, 
and quickly solve it by an off-the-shelf Interior Point Method.

In practice, very often the cone constraint in 
\eqref{eqn:small_sys} is \emph{inactive}. The reason is that under reasonable conditions (see Theorem~\ref{obs:unique_sol}), in a noiseless situation, there is a unique solution to the equation $\mathcal{A}(Z)=d$, which happens to be positive definite. Hence by the robustness guarantees (Theorem~\ref{thm:robust}) a small amount of noise in $d$ will lead to a matrix solving $\min_Z \|A(\svec Z)-d\|^2_2$ that is automatically positive definite. Heuristically,
we can simply drop the cone constraint in \eqref{eqn:small_sys}
and consider the unconstrained least squares problem 
\begin{equation}
\label{eqn:small_sys_unconstr}
\min_Z~ \|A(\svec Z)-d\|^2_2,
\end{equation}
which can be solved very efficiently by classical methods.
With this observation, we often can solve \eqref{eqn:small_sys}
\emph{without using any optimization software}.

\section{Robustness of facial reduction}\label{sec:robustness} In this section, we provide rudimentary robustness guarantees on the Algorithm~\ref{alg_main}. To this end,
consider two $n\times r$ matrices $U$ and $V$, each with orthonormal columns. Then the {\em principal angles} between $\range U$ and $\range V$ are the arccosines of the singular values of $U^TV$. We will denote the vector of principal angles between these subspaces, arranged in nondecreasing order, by $\Gamma$. The symbols $\sin^k (\Gamma)$ and $\cos^k (\Gamma)$ will have obvious meanings. Thus the vector of singular values $\sigma(U^TV)$, arranged in nondecreasing order, coincides with $\cos (\Gamma)$. Consequently
in terms of the matrix $$\Delta=I-(V^TU)(V^TU)^T,$$ 
the eigenvalue vector $\lambda(\Delta)$ coincides with $\sin^2 (\Gamma)$. An important property is that the principal angles between $\range U$ and $\range V$ and the principal angles between $(\range U)^{\perp}$ and $(\range V)^{\perp}$, coincide modulo extra $\frac{\pi}{2}$ angles that appear for dimensional reasons. The following is a deep result that is fundamental to our analysis \cite{MR0246155,MR0180852, MR0264450}. It estimates the deviation in range spaces of matrices that are nearby in norm.
\smallskip

\begin{thm}[Distances and principal angles]\label{thm:dist_princ}
Consider two matrices $X,Y\in \mathcal{S}^n_+$ of rank $r$ and let $\Gamma$ be the vector of principal angles between $\range X$ and $\range Y$. Then the inequality 
$$\|\sin (\Gamma)\| \leq \frac{\|X-Y\|}{\delta(X,Y)}\qquad \textrm{ holds},$$
where $\delta(X,Y):=\min\{\lambda_r(X),\lambda_r(Y)\}$.
\end{thm}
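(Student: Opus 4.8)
The plan is to reduce the statement to a $\sin\Theta$-type estimate for the orthogonal projections onto $\range X$ and $\range Y$, and then bound the relevant projection norm directly by $\|X-Y\|$. Write $X=U\Sigma U^T$ and $Y=V\Xi V^T$ for the compact spectral decompositions, so that $U,V\in\R^{n\times r}$ have orthonormal columns spanning $\range X$ and $\range Y$, while $\Sigma,\Xi\in\mathcal{S}^r_+$ are diagonal with strictly positive diagonal entries and $\Sigma\succeq\lambda_r(X)I$. Let $P_X=UU^T$ and $P_Y=VV^T$ be the associated orthogonal projections. First I would record the identification of $\sin(\Gamma)$ with a projection: since $U^T(I-P_Y)U=I-(U^TV)(V^TU)=\Delta$, whose eigenvalues are $\sin^2(\Gamma)$ as already noted before the theorem, the singular values of $(I-P_Y)U$ are exactly the entries of $\sin(\Gamma)$. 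In particular $\|(I-P_Y)U\|=\|\sin(\Gamma)\|$, Frobenius norm paired with the Euclidean vector norm.

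The key algebraic observation is that the complementary projection annihilates $Y$: since $P_YY=Y$, we have $(I-P_Y)Y=0$, and therefore
\[
(I-P_Y)X=(I-P_Y)(X-Y).
\]
Because $I-P_Y$ has norm one, this yields the clean estimate $\|(I-P_Y)X\|\le\|X-Y\|$. It remains to pass from $(I-P_Y)X$ back to $(I-P_Y)U$, that is, to the principal angles. Using $X=U\Sigma U^T$ and that right multiplication by $U^T$ preserves the Frobenius norm, I would write $\|(I-P_Y)X\|=\|(I-P_Y)U\Sigma\|$ and then bound this quantity below in terms of $\|(I-P_Y)U\|$.

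This last lower bound is the main obstacle, since $\Sigma$ and $(I-P_Y)U$ do not commute. The remedy is a trace computation exploiting $\Sigma^2\succeq\lambda_r(X)^2I$: setting $M=(I-P_Y)U$ and using $M^TM=\Delta\succeq0$,
\[
\|M\Sigma\|_F^2=\tr\big(\Sigma\,M^TM\,\Sigma\big)=\tr\big(M^TM\,\Sigma^2\big)\ge\lambda_r(X)^2\,\tr(M^TM)=\lambda_r(X)^2\|M\|_F^2,
\]
so that $\|(I-P_Y)U\Sigma\|_F\ge\lambda_r(X)\,\|(I-P_Y)U\|_F$. The same inequality holds in the operator norm via $M\Sigma^2M^T\succeq\lambda_r(X)^2MM^T$, which makes the whole argument go through for any unitarily invariant norm. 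Chaining the three displays gives $\lambda_r(X)\|\sin(\Gamma)\|\le\|(I-P_Y)X\|\le\|X-Y\|$, hence $\|\sin(\Gamma)\|\le\|X-Y\|/\lambda_r(X)$. Since $\lambda_r(X)\ge\delta(X,Y)$, the claimed bound follows. I note finally that running the symmetric argument with the roles of $X$ and $Y$ interchanged produces the companion bound with $\lambda_r(Y)$ in the denominator, so the minimum defining $\delta(X,Y)$ is a conservative and symmetric choice.
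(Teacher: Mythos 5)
Your argument is correct, and it is worth noting that the paper does not actually prove this statement: Theorem~\ref{thm:dist_princ} is quoted there as a ``deep result'' with citations to the Davis--Kahan literature, so your derivation supplies a self-contained proof where the paper offers none. Each step checks out: $\|(I-P_Y)U\|_F^2=\tr\bigl(U^T(I-P_Y)U\bigr)=\tr\bigl(I-(U^TV)(V^TU)\bigr)=\|\sin(\Gamma)\|^2$ (your matrix $I-(U^TV)(V^TU)$ is the transpose-ordered twin of the paper's $\Delta=I-(V^TU)(V^TU)^T$, but the two are similar and in any case share the same trace, so nothing is lost); the identity $(I-P_Y)X=(I-P_Y)(X-Y)$ together with $\|(I-P_Y)A\|_F\le\|A\|_F$ gives the upper bound; the invariance $\|(I-P_Y)U\Sigma U^T\|_F=\|(I-P_Y)U\Sigma\|_F$ holds because $U^TU=I_r$; and the trace inequality $\tr(M^TM\,\Sigma^2)\ge\lambda_r(X)^2\tr(M^TM)$ is the standard $\tr(AB)\ge\lambda_{\min}(B)\tr(A)$ for PSD $A,B$. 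The argument exploits exactly the structure that makes the PSD rank-$r$ case easier than the general Davis--Kahan setting: the relevant ``spectral gap'' is between the smallest positive eigenvalue and zero, which is why a one-line projection identity replaces the usual Sylvester-equation machinery. As you observe, the symmetric version even yields the sharper denominator $\max\{\lambda_r(X),\lambda_r(Y)\}$, which implies the stated bound with $\delta(X,Y)=\min\{\lambda_r(X),\lambda_r(Y)\}$; this is entirely compatible with how the theorem is invoked in Corollary~\ref{cor:dev_expos} and Theorem~\ref{thm:robust}.
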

\smallskip

The following is immediate now.
\smallskip
\begin{cor}[Deviation in exposing vectors]\label{cor:dev_expos}
Consider two rank $r$ matrices $X,Y\in\mathcal{S}^n_+$ and let $U$ and $V$ be $n\times r$ matrices with orthonormal columns that span $\ker X$ and $\ker Y$ respectively.  Then we have
$$\|UU^T-VV^T\| =\sqrt{2} \left(\frac{\|X-Y\|}{\delta(X,Y)}\right).$$
\end{cor}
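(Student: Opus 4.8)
The plan is to reduce the corollary to Theorem~\ref{thm:dist_princ} by trading the kernels for the ranges. First I would note that, since $U$ and $V$ have orthonormal columns spanning $\ker X$ and $\ker Y$, the matrices $UU^T$ and $VV^T$ are exactly the orthogonal projections onto $\ker X$ and $\ker Y$. As $X$ and $Y$ are symmetric, $\ker X=(\range X)^{\perp}$ and $\ker Y=(\range Y)^{\perp}$; hence, writing $P_X$ and $P_Y$ for the orthogonal projections onto $\range X$ and $\range Y$, we have $UU^T=I-P_X$ and $VV^T=I-P_Y$. Subtracting gives $UU^T-VV^T=P_Y-P_X$, so in particular $\|UU^T-VV^T\|=\|P_X-P_Y\|$. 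This is the crucial reduction: it replaces the kernel projections, whose principal angles are awkward to feed into Theorem~\ref{thm:dist_princ}, by the range projections, which are precisely what that theorem controls.

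Next I would evaluate $\|P_X-P_Y\|$ in terms of the principal angles $\Gamma$ between $\range X$ and $\range Y$. Since $X$ and $Y$ have equal rank, their ranges share the same dimension, and writing $P_X=\tilde U\tilde U^T$, $P_Y=\tilde V\tilde V^T$ for orthonormal bases $\tilde U,\tilde V$ of the two ranges, the standard expansion
$$\|P_X-P_Y\|^2=\tr P_X+\tr P_Y-2\tr(P_XP_Y)=2\,\tr P_X-2\,\|\tilde U^T\tilde V\|^2=2\,\|\sin(\Gamma)\|^2$$
holds, using $\tr P_X=\tr P_Y$ together with the fact that the singular values of $\tilde U^T\tilde V$ are exactly $\cos(\Gamma)$ and that $\|\cos(\Gamma)\|^2+\|\sin(\Gamma)\|^2$ equals the common dimension $\tr P_X$. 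Thus $\|UU^T-VV^T\|=\sqrt2\,\|\sin(\Gamma)\|$, which accounts for the factor $\sqrt2$ appearing in the statement.

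Finally I would apply Theorem~\ref{thm:dist_princ}, namely $\|\sin(\Gamma)\|\leq \|X-Y\|/\delta(X,Y)$ with $\delta(X,Y)=\min\{\lambda_r(X),\lambda_r(Y)\}$, and combine it with the identity of the previous paragraph to conclude
$$\|UU^T-VV^T\|=\sqrt2\,\|\sin(\Gamma)\|\leq \sqrt2\,\frac{\|X-Y\|}{\delta(X,Y)}.$$
(Strictly, the first relation is an exact identity and only the second is an inequality, so the bound in the corollary is really ``$\leq$'' rather than ``$=$''.) I do not anticipate a genuine obstacle: all of the analytic content is quarantined in Theorem~\ref{thm:dist_princ}, which I use as a black box, and the remainder is the bookkeeping of complementary subspaces plus one elementary trace computation. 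The only point deserving mild care is checking that the equal-rank hypothesis forces $\range X$ and $\range Y$ to have the same dimension, so that the clean projection-difference identity $\|P_X-P_Y\|=\sqrt2\,\|\sin(\Gamma)\|$ is available.
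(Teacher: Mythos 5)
Your argument is correct and follows essentially the same route as the paper: both reduce $\|UU^T-VV^T\|$ to $\sqrt{2}\,\|\sin(\Gamma)\|$ by the trace expansion of the squared Frobenius norm of a difference of orthogonal projections (the paper works with the kernel bases and invokes the correspondence of principal angles between complementary subspaces, while you pass to the range projections via $UU^T=I-P_X$, which is the same computation) and then apply Theorem~\ref{thm:dist_princ}. Your parenthetical observation is also right: since Theorem~\ref{thm:dist_princ} only supplies an inequality, the ``$=$'' in the corollary's conclusion should be ``$\leq$'', exactly as the paper's own two-line proof in fact delivers.
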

\begin{proof}
Observe $\|UU^T-VV^T\|^2= 2\tr(I-(V^TU)(V^TU)^T)=2\|\sin (\Theta)\|^2$. Applying Theorem~\ref{thm:dist_princ}, the result follows.
\end{proof}
\smallskip

Next, we will need the following lemma.
\smallskip
\begin{lem}[Projections onto subsets of symmetric matrices]\label{lem:proj_sub}
	For any $n\times r$-matrix $U$ with orthonormal columns, and a matrix $X\in\mathcal{S}^n$, we have
	\begin{equation}\label{eqn:proj_lin}
	\proj(X;U\Ss^r U^T)=UU^TXUU^T,
	\end{equation}
	and for any subset $\mathcal{Q}\in \Ss^r$, we have
	\begin{equation}\label{eqn:proj_set}
	\proj(X;U\mathcal{Q} U^T)=U\proj(U^TXU; \mathcal{Q})U^T.
	\end{equation}
\end{lem}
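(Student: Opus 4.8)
The plan is to handle the two claims in sequence, deriving the second from the first together with a Pythagorean decomposition. Throughout I will use repeatedly the elementary observation that, since $U^TU=I_r$, the linear map $C\mapsto UCU^T$ is an isometry from $\Ss^r$ into $\Ss^n$: indeed $\|UCU^T\|_F^2=\tr(UCU^TUCU^T)=\tr(UC^2U^T)=\tr(C^2)=\|C\|_F^2$, using $U^TU=I_r$ and symmetry of $C$. This isometry, together with the adjoint relation $\langle X,UAU^T\rangle=\langle U^TXU,A\rangle$, will do most of the work.

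For the first identity, note that $U\Ss^r U^T$ is a linear subspace of $\Ss^n$, so $\proj(X;U\Ss^r U^T)$ is the unique minimizer of $A\mapsto \|X-UAU^T\|_F^2$ over $A\in\Ss^r$. I would expand this squared norm using $\langle X,UAU^T\rangle=\tr(U^TXU\,A)=\langle U^TXU,A\rangle$ and the isometry identity $\|UAU^T\|_F^2=\|A\|_F^2$, and then complete the square to obtain $\|X-UAU^T\|_F^2=\|X\|_F^2-\|U^TXU\|_F^2+\|A-U^TXU\|_F^2$. Since $X$ is symmetric, the matrix $U^TXU$ lies in $\Ss^r$, so the unique minimizer is $A=U^TXU$, giving $\proj(X;U\Ss^r U^T)=U(U^TXU)U^T=UU^TXUU^T$, as claimed.

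For the second identity, fix $B\in\mathcal{Q}$ and split
$$X-UBU^T=\bigl(X-UU^TXUU^T\bigr)+\bigl(UU^TXUU^T-UBU^T\bigr).$$
The second term lies in $U\Ss^r U^T$, while the first is the residual $X-\proj(X;U\Ss^r U^T)$, which is orthogonal to that subspace by the first identity. Hence the Pythagorean theorem yields $\|X-UBU^T\|_F^2=\|X-UU^TXUU^T\|_F^2+\|UU^TXUU^T-UBU^T\|_F^2$. The first summand is independent of $B$, and the second equals $\|U(U^TXU-B)U^T\|_F^2=\|U^TXU-B\|_F^2$ by the isometry identity. Minimizing over $B\in\mathcal{Q}$ therefore reduces to minimizing $\|U^TXU-B\|_F^2$ over $B\in\mathcal{Q}$, whose minimizers are exactly the elements of $\proj(U^TXU;\mathcal{Q})$; applying the isometry $C\mapsto UCU^T$ then produces $\proj(X;U\mathcal{Q}U^T)=U\proj(U^TXU;\mathcal{Q})U^T$.

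I do not anticipate a serious obstacle here; the only points requiring care are verifying the orthogonality needed for the Pythagorean split, which is precisely the content of the first identity, and confirming that $C\mapsto UCU^T$ is an isometry on $\Ss^r$. Both reduce to direct trace computations relying on $U^TU=I_r$.
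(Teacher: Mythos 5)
Your proof is correct and follows essentially the same route as the paper's: project onto the linear subspace $U\Ss^r U^T$ first, then use the isometry $C\mapsto UCU^T$ to reduce the constrained projection to one in $\Ss^r$. The only difference is cosmetic — you make explicit the Pythagorean decomposition that the paper invokes implicitly by stating that the projection onto $U\mathcal{Q}U^T$ factors through the projection onto its linear span.
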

\begin{proof}
	Optimality conditions for the optimization problem
	$$\min_{Y\in \Ss^r} \|X- U Y U^T\|^2$$
	immediately imply 
	$\proj(X;U\Ss^r U^T)=UU^TXUU^T$. Since $U\mathcal{Q} U^T$ is contained in the linear space $U\Ss^r U^T$, the projection $\proj(X;U\mathcal{Q} U^T)$ factors into a composition
	$$\proj(X;U\mathcal{Q} U^T)=\proj\Big(\proj(X;U\Ss^r U^T);  U\mathcal{Q} U^T\Big),$$
	Combining this with equation \eqref{eqn:proj_lin} we deduce
	$$\proj(X;U\mathcal{Q} U^T)=\proj\Big(UU^TXUU^T;  U\mathcal{Q} U^T\Big).$$
	On the other hand, since the columns of $U$ are orthonormal, for any $Y\in \Ss^r$ we clearly have
	$$\|UU^TXUU^T- UYU^T\|=\|U^TXU- Y\|,$$
	and equation \eqref{eqn:proj_set} follows immediately.
\end{proof}

\smallskip

\begin{cor}[Distances between faces]\label{lem:dist_subs}
Consider two $n\times r$ matrices $U$ and $V$, each with orthonormal columns and let $\Gamma$ be the vector of principal angles between $\range U$ and $\range V$.
Then for any $Z\in \mathcal{S}^r_+$ the estimate holds:
$$\dist(VZV^T; U\mathcal{S}^r_+ U^T)\leq \sqrt{2}\cdot\|Z\|\cdot \|\sin(\Gamma)\|.$$
\end{cor}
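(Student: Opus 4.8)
The plan is to first pin down the nearest point in $U\mathcal{S}^r_+U^T$ to $VZV^T$ and then estimate the residual through an orthogonal splitting. Write $M:=U^TV$, so that the singular values of $M$ are $\cos(\Gamma)$. Since $Z\succeq 0$, the matrix $MZM^T$ lies in $\mathcal{S}^r_+$, so applying Lemma~\ref{lem:proj_sub} (equation~\eqref{eqn:proj_set} with $\mathcal{Q}=\mathcal{S}^r_+$) gives $\proj(VZV^T;U\mathcal{S}^r_+U^T)=U\proj(MZM^T;\mathcal{S}^r_+)U^T=U(MZM^T)U^T$, whence
\[
\dist(VZV^T;U\mathcal{S}^r_+U^T)=\|VZV^T-U(MZM^T)U^T\|.
\]
Introducing the orthogonal projection $P:=UU^T$ onto $\range U$ and setting $W:=VZV^T$, I note $U(MZM^T)U^T=(PV)Z(PV)^T=PWP$, so the quantity to bound is simply $\|W-PWP\|$.

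The crucial idea for obtaining the sharp constant $\sqrt{2}$ (instead of the $2$ a naive triangle inequality would give) is to decompose $W-PWP=(I-P)W+PW(I-P)$ and exploit orthogonality. Because $W$ and $P$ are symmetric and $(I-P)P=0$, the Frobenius inner product of the two summands is $\tr\big(W(I-P)\,PW(I-P)\big)=0$, so Pythagoras yields
\[
\|W-PWP\|^2=\|(I-P)W\|^2+\|PW(I-P)\|^2,
\]
and it remains to bound each term by $\|Z\|\,\|\sin(\Gamma)\|$.

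For the individual estimates I would record the key fact about $A:=(I-P)V$: since $V^T(I-P)V=I-M^TM$ has eigenvalues $\sin^2(\Gamma)$, the singular values of $A$ are exactly $\sin(\Gamma)$, so $\|A\|_F=\|\sin(\Gamma)\|$. Using that $V$ has orthonormal columns (right multiplication by $V^T$ preserves the Frobenius norm) together with submultiplicativity $\|BC\|_F\le\|B\|_F\,\|C\|_{\mathrm{op}}$, the first term obeys $\|(I-P)W\|=\|AZV^T\|=\|AZ\|\le\|A\|_F\,\|Z\|=\|\sin(\Gamma)\|\,\|Z\|$. For the second, writing $PW(I-P)=(PV)ZA^T$ and using $\|PV\|_{\mathrm{op}}=\|M\|_{\mathrm{op}}=\max\cos(\Gamma)\le 1$, I get $\|PW(I-P)\|\le\|PV\|_{\mathrm{op}}\,\|Z\|\,\|A\|_F\le\|Z\|\,\|\sin(\Gamma)\|$. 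Substituting both into the Pythagorean identity produces $\|W-PWP\|^2\le 2\,\|Z\|^2\,\|\sin(\Gamma)\|^2$, which is exactly the claim.

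The main obstacle is not conceptual but careful bookkeeping of norms: one must pair $\|\sin(\Gamma)\|$ (the Euclidean norm of the sine vector, equivalently $\|A\|_F$) against $\|Z\|$ in the operator norm so that the correct mixed bound $\|\sin(\Gamma)\|\,\|Z\|$ emerges from submultiplicativity, and one must verify the vanishing of the cross term. The genuine content is the orthogonal decomposition $W-PWP=(I-P)W+PW(I-P)$, which, via $(I-P)P=0$, converts the sum of two bounds into a sum of squares and so recovers the factor $\sqrt{2}$ rather than $2$.
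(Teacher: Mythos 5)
Your proof is correct and takes essentially the same route as the paper: both identify the projection as $UU^T(VZV^T)UU^T$ via Lemma~\ref{lem:proj_sub} and then bound the Frobenius residual, and your orthogonal splitting $W-PWP=(I-P)W+PW(I-P)$ is just a repackaging of the paper's direct trace expansion, since with $\Delta=I-(V^TU)(V^TU)^T$ your two squared terms equal $\tr(Z^2\Delta)$ and $\tr(Z^2\Delta)-\tr(Z\Delta Z\Delta)$, which sum to the paper's expression $2\|\Delta^{1/2}Z\|^2-\|\Delta^{1/2}Z\Delta^{1/2}\|^2$. The final estimate, pairing the operator norm of $Z$ with $\|\sin(\Gamma)\|=\|A\|_F$, matches the paper's bound $2\tr(Z^2\Delta)\leq 2\|Z\|^2\|\sin(\Gamma)\|^2$.
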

\begin{proof}
Appealing to Lemma~\ref{lem:proj_sub}, we obtain the equation $\proj(VZV^T;U\mathcal{S}^r_+ U^T)=UU^T(VZV^T)UU^T$. Define now the matrix $\Delta=I-(V^TU)(V^TU)^T$. We successively deduce
\begin{align*}
\dist^2(&VZV^T; U\mathcal{S}^r_+ U^T)=\|VZV^T-UU^T(VZV^T)UU^T\|^2\\
&= \|VZV^T\|^2-2\tr(VZV^TUU^TVZV^TUU^T) +\tr(U^TVZV^TUU^TVZV^TU)\\
&=\|Z\|^2-2\tr\Big(\big(Z(V^TU)(V^TU)^T\big)^2\Big)+ \tr\Big(\big(Z(V^TU)(V^TU)^T\big)^2\Big)\\
&=\tr\Big(Z^2- \big(Z(V^TU)(V^TU)^T\big)^2\Big) \\
&=\tr\Big(Z^2- \big(Z-Z\Delta\big)^2\Big)=\tr\Big(2Z^2\Delta- Z\Delta Z\Delta \Big)=2\|\Delta^{\frac{1}{2}} Z\|^2-\|\Delta^{\frac{1}{2}}Z\Delta^{\frac{1}{2}}\|^2.
\end{align*}
Hence we deduce
\begin{align*}
\dist^2(VZV^T; U\mathcal{S}^r U^T)&=2\tr(Z^2\Delta)- \|Z^{\frac{1}{2}}\Delta Z^{\frac{1}{2}}\|^2\leq 2\tr(Z^2\Delta)\leq 2 \cdot\|Z\|^2\cdot\|\Lambda\|\\
&=2 \cdot\|Z\|^2\cdot\|\sin^2(\Theta)\|= 2 \cdot\|Z\|^2\cdot\|\sin(\Theta)\|^2.
\end{align*}
The result follows.
\end{proof}

\smallskip

We are now ready to formally prove robustness guarantees on the method. For simplicity, we will assume that the exposing matrices $W_{\alpha}$ are of the form $UU^T$ where $U$ have orthonormal columns, and that $\omega_{\alpha}(d)=1$ for all cliques $\alpha$ and all $d\in\R^E$. The arguments can be easily adapted to a more general setting.
For any subgraph $H$ of $G$, 
we let $d[H]$ denote the restriction of $d$ to $H$.
Following \cite{SoYe:05}, the EDM completion problem is said to be 
{\em uniquely $r$-localizable} if 
either of the equivalent conditions in 
Observation~\ref{obs:unique_sol} holds. 
\index{uniquely $r$-localizable}
In what follows, let $Alg(d)$ be the output of 
Algorithm \ref{alg_main} on the EDM completion problem.
\index{$Alg(\cdot)$, output of Algorithm \ref{alg_main}}

\smallskip
\begin{thm}[Robustness]\label{thm:robust}
	Suppose the following: 
	\begin{itemize}
		\item for any clique $\alpha\in \Theta$, 
		the subgraph on $\alpha$ has embedding dimension $r$;
		\item the EDM completion problem is uniquely $r$-localizable 
		and Alg(d) is the realization of $G$.
		\item the matrix $Y$ obtained during the run on the noiseless problem has rank $n-r$;
	\end{itemize}
	Then there exist constants $\varepsilon >0$ and $\kappa >0$ 
	so that 
	\[
	\|\PP\circ\KK(Alg(\hat{d}))-\hat{d}\|
	\leq \kappa \|\hat{d}-d\|
	\textrm{ \small{whenever} } \|\hat{d}-d\| <\varepsilon.
	\]
\end{thm}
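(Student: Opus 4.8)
The plan is to propagate the perturbation from the noisy data $\hat d$ through each stage of Algorithm~\ref{alg_main} to the exposing matrix $\hat Y$, thereby showing that the face $\hat Y^{\perp}\cap\Ss^n_{c,+}$ over which the final least squares problem is solved lies within $O(\|\hat d - d\|)$ of the noiseless face $Y^{\perp}\cap\Ss^n_{c,+}$. Once this is established, I would exhibit a single feasible point for the \emph{noisy} subproblem whose misfit is controlled linearly in $\|\hat d - d\|$, obtained by projecting the true realization onto the perturbed face; since $Alg(\hat d)$ minimizes the misfit over that face, its residual can be no larger, which is exactly the asserted bound.

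For the perturbation propagation, the three hypotheses supply the spectral gaps that make every nearest-point map in the algorithm locally Lipschitz. First, since each clique subgraph has embedding dimension $r$, the noiseless matrix $\KK^{\dagger}d_\alpha$ already lies in $\Ss^{k,r}_{c,+}$ and has rank exactly $r$, so its $r$th and $(r+1)$th eigenvalues are separated; hence the Eckart--Young projection $X_\alpha$ is stable and $\|\hat X_\alpha - X_\alpha\|\leq C\|\hat d - d\|$ for $\|\hat d - d\|$ small. Theorem~\ref{thm:dist_princ} together with Corollary~\ref{cor:dev_expos} then converts this into a bound $\|\hat W_\alpha - W_\alpha\|\leq C'\|\hat d - d\|$ on the exposing vectors, and summing over $\alpha$ (with all weights equal to $1$, as assumed) gives $\|\hat W - W\|\leq C''\|\hat d - d\|$. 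The third hypothesis, that $Y$ has rank $n-r$ in the noiseless run, provides a spectral gap for the projection onto $\Ss^{n,n-r}_{c,+}$, so that $\|\hat Y - Y\|\leq C'''\|\hat d - d\|$; applying Corollary~\ref{cor:dev_expos} once more to the two rank-$(n-r)$ matrices $Y$ and $\hat Y$ bounds the principal angles $\Gamma$ between their kernels $\range U$ and $\range\hat U$ by $\|\sin(\Gamma)\|\leq C_4\|\hat d - d\|$.

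With the faces shown to be close, I would conclude as follows. By unique $r$-localizability (Observation~\ref{obs:unique_sol}) the noiseless problem has a unique solution $\bar X\in U\Ss^r_+U^T$ satisfying $\PP\circ\KK(\bar X)=d$; write $\bar X=UZU^T$ with $Z\in\Ss^r_+$. Corollary~\ref{lem:dist_subs} then yields a matrix $\tilde X\in\hat U\Ss^r_+\hat U^T=\hat Y^{\perp}\cap\Ss^n_{c,+}$ with
\[
\|\tilde X - \bar X\|\leq\sqrt 2\,\|Z\|\,\|\sin(\Gamma)\|\leq C_5\|\hat d - d\|.
\]
Since $\tilde X$ is feasible for the noisy subproblem, the triangle inequality and boundedness of $\PP\circ\KK$ give
\begin{align*}
\|\PP\circ\KK(\tilde X)-\hat d\|
&\leq \|\PP\circ\KK\|\,\|\tilde X - \bar X\| + \|\PP\circ\KK(\bar X)-\hat d\|\\
&= \|\PP\circ\KK\|\,\|\tilde X - \bar X\| + \|d-\hat d\|
\leq \big(C_5\|\PP\circ\KK\|+1\big)\|\hat d - d\|.
\end{align*}
Because $Alg(\hat d)$ minimizes the misfit over $\hat Y^{\perp}\cap\Ss^n_{c,+}$ and $\tilde X$ is one feasible competitor, the output misfit is no larger, establishing the claim with $\kappa:=C_5\|\PP\circ\KK\|+1$.

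I expect the main obstacle to be the perturbation propagation of the second paragraph, namely certifying that the successive nearest-point maps (onto $\Ss^{k,r}_{c,+}$ at each clique and onto $\Ss^{n,n-r}_{c,+}$ at the aggregation step) are single-valued and Lipschitz near the noiseless iterates. This is precisely where the Davis--Kahan-type estimate of Theorem~\ref{thm:dist_princ}, funneled through Corollary~\ref{cor:dev_expos}, does the work, and where one must choose $\varepsilon>0$ small enough that the relevant spectral gaps persist under the perturbation, so that $\hat X_\alpha$ and $\hat Y$ retain the correct ranks and the projections remain stable.
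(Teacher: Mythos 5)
Your proposal is correct and follows essentially the same route as the paper's proof: propagate the perturbation clique-by-clique through the nearest-point maps using the spectral gaps guaranteed by the hypotheses, convert to exposing-vector and then principal-angle bounds via Theorem~\ref{thm:dist_princ} and Corollary~\ref{cor:dev_expos}, and bound the distance between the noiseless and perturbed faces via Corollary~\ref{lem:dist_subs}. The only difference is that you make explicit the final step (exhibiting the projected competitor $\tilde X$ in the perturbed face and invoking optimality of $Alg(\hat d)$), which the paper compresses into ``The result follows.''
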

\begin{proof}
Throughout the proof, we will use the hat superscript to denote the objects (e.g. $\widehat{X}_{\alpha}$, $\widehat{W}_{\alpha}$) generated by Algorithm~\ref{alg_main} when it is run with the distance measurements $\widehat{d}\in\R^E$.	
Clearly for any $\hat{d}\in\R^E$, we have $\|\KK^{\dagger}\hat{d}_{\alpha}-\KK^{\dagger}d_{\alpha}\|=\mathcal{O}(\|d_{\alpha}-\hat{d}_{\alpha}\|)$ for any clique $\alpha\in \Theta$. Fix any such clique $\alpha$, and notice by our assumptions $\KK^{\dagger}d_{\alpha}$ has rank $r$.	Consequently 
$\|\widehat{X}_{\alpha}-X_{\alpha}\|=\mathcal{O}(\|d_{\alpha}-\hat{d}_{\alpha}\|)$ whenever $\hat{d}$ is sufficiently close to $d$. Appealing then to Corollary~\ref{cor:dev_expos}, we deduce $\|\widehat{W}_{\alpha}-W_{\alpha}\|=\mathcal{O}(\|\widehat{X}_{\alpha}-X_{\alpha}\|)=\mathcal{O}(\|d_{\alpha}-\hat{d}_{\alpha}\|)$. Hence $\|\widehat{W}-W\|=\mathcal{O}(\|d-\hat{d}\|)$ for all $\hat{d}$ sufficiently close to $d$. Since $W$ has rank $n-r$, we deduce $\|\widehat{Y}-Y\|=\mathcal{O}(\|d-\hat{d}\|)$.
Appealing to Theorem~\ref{thm:dist_princ}, we then deduce $\|\sin(\Gamma)\|=O(\|d-\hat{d}\|)$, where 
$\Gamma$ is the principle angle vector between the null spaces of $\widehat{Y}$ and $Y$. By Corollary~\ref{lem:dist_subs}, then 
$$\dist\Big(X; \face(\widehat{X},\mathcal{S}^n_+)\Big)= \mathcal{O}(\|\hat{d}-d\|).$$ 
The result follows.
\end{proof}


%
%
%
%
%
%

\bigskip
{\bf Acknowledgments.}
We thank Sasha Aravkin for pointing out a part of the proof of Theorem~\ref{thm:aff_or}.

\bibliographystyle{siam}
\bibliography{master,EDM,psd}

\end{document}